\newtheorem{theorem}{Theorem}[section]
\newtheorem{proposition}[theorem]{Proposition}
\newtheorem{lemma}[theorem]{Lemma}
\newtheorem{corollary}[theorem]{Corollary}
\newcommand{\R}{\mathbb{R}}
\newenvironment{demT2}{\textit{Proof of Theorem~\ref{T2}.}}{\hfill$\square$}
\begin{document}
\title{On the structure of positive solutions for a class of quasilinear  equations\thanks{Research partially supported by CAPES and CNPq grants 308735/2016-1 and 307770/2015-0\newline E-mail addresses: {\tt
everaldo@mat.ufpb.br} (Everaldo Medeiros), {\tt
uberlandio@mat.ufpb.br} (Uberlandio Severo), {\tt
willian\_matematica@hotmail.com} (Willian Cintra) }}

\author{{Willian Cintra,\, Everaldo Medeiros\,\  and\,\  Uberlandio Severo}\\
{\small Universidade Federal da Para\'\i ba,\, Departamento de Matem\'atica}\\
{\small 58051-900, Jo\~ao Pessoa--PB,
Brazil}}
\maketitle

\begin{abstract}
This paper studies the existence, nonexistence and uniqueness of positive solutions for a class of quasilinear equations. We also analyze the behavior of this solutions with respect to two parameters $\kappa$ and $\lambda$ that appears in the equation. The proof of our main results relies on bifurcation techniques, the sub and supersolution method and a construction of an appropriate large solutions.

\bigskip

\noindent {\bf Keywords and phrases:} Quasilinear equations; Bifurcation; Sub and supersolution method; Large solutions, Stability.

\noindent {\bf AMS Subject Classification:} 35J25, 35J62, 35B32, 35B40
\end{abstract}

\section{Introduction}
The main goal of this paper is to study existence, nonexistence, uniqueness and asymptotic behavior of positive solutions for the following class of quasilinear equations
\begin{equation}\label{P}
    \left\{ \begin{aligned}
         -\Delta u -\kappa \Delta(u^2)u &= \lambda u - b(x) |u|^{p-1}u &\mbox{in}&\quad\Omega,\\
         u&=0 & \mbox{on}&\quad\partial \Omega,
    \end{aligned} \right.
    \tag{$\mathcal{P_\kappa}$}
\end{equation}
where $\Omega\subset\mathbb{R}^N$ $(N\geq1)$ is a smooth bounded domain, $p>1$ is a constant, $\kappa$ and $\lambda$ are positive parameters and the weight function $b(x)$ satisfies certain regularity conditions.

Problem \eqref{P} with $\kappa=0$ becomes the classical semilinear elliptic problem
\begin{equation}\label{Plog}
\left\{ \begin{aligned}
-\Delta u  &= \lambda u - b(x) |u|^{p-1}u &\mbox{in}&\quad\Omega,\\
u&=0 & \mbox{on}&\quad\partial \Omega
\end{aligned} \right.
\tag{$\mathcal{P}_0$}
\end{equation}
whose positive solutions are \textit{equilibria} or \textit{stationary} solutions of the the following reaction diffusion problem of logistic type

\begin{equation}
\left\{ \begin{aligned}
u_t-\Delta u &= \lambda u - b(x) u^p &\mbox{in}&\quad\Omega,\quad t>0\\
u&=0 & \mbox{on}&\quad\partial \Omega,\quad t>0\\
u(0)&=u_0\geq0,
\end{aligned} \right.
\end{equation}
see for instance \cite{Arrieta} and references therein. Problem \eqref{Plog} has been  object of intense study  by many authors. If $\lambda\leq\lambda_1$ ($\lambda_1$ being the principal eigenvalue of $(-\Delta, H^1_0(\Omega))$ then the problem \eqref{Plog} could have only the trivial solution, see Ambrosetti \cite{Ambrosetti}. In \cite{Ambrosetti-Mancini}  Ambrosetti-Macini  prove that if $\lambda>\lambda_1$ then the problem has two nontrivial solutions of constant sign (one positive and the other negative). Soon thereafter Struwe \cite{Struwe} improved the result and proved that if $\lambda>\lambda_2$ the problem \eqref{Plog} has three nontrivial solutions. Subsequently Ambrosetti-Lupo \cite{Ambrosetti-Lupo} slightly improved the work of Struwe \cite{Struwe} and also presented an approach based on Morse theory. This class of problems involving a more general operator as the $p-laplacian$ (see for instance \cite{Papageorgiou}). In \cite{Olimpio2016} the authors have consider problem \eqref{P} with $b=0$ and they proved that $\eqref{P}$ has only the trivial solution if $\lambda<\lambda_1$.

The study of quasilinear equations involving the operator $L_{\kappa}u:=\Delta u+\kappa\Delta(u^2)u$ arises in various branches of mathematical physics. It is well-known that nonlinear Schr\"{o}dinger equations of the form
\begin{equation}\label{SCH2}
i\partial_t \psi =-\Delta \psi+V(x)\psi- \kappa\Delta
[|\psi|^2]\psi-h(|\psi|^2)\psi,
\end{equation}
where $\psi:\mathbb{R}\times  \mathbb{R}^N \rightarrow\mathbb{C}$,
$V=V(x)$ is a given potential, $\kappa$ is a real constant and
$h$ is a real function, have been studied in relation with some
mathematical models in physics (see for instance \cite{Poppenberg-Schmitt-Wang}).
It was shown that a system describing the self-trapped electron on
a lattice can be reduced in the continuum limit to (\ref{SCH2})
and numerics results on this equation are obtained in
\cite{Brizhik}. In \cite{Hartmann}, motivated by the nanotubes and
fullerene related structures, it was proposed and shown that a
discrete system describing the interaction of a 2-dimensional
hexagonal lattice with an excitation caused by an excess electron
can be reduced to (\ref{SCH2}) and numerics results have been done
on domains of disc type, cylinder type and sphere type.

Setting $\psi(t,x)=\exp(-iFt)u(x),\; F\in \mathbb{R}$, into the
equation (\ref{SCH2}), we obtain a corresponding equation
\begin{equation}\label{simp}
-\Delta u -\kappa\Delta (u^2)u = g(u)-V(x)u\quad\mbox{in} \quad \Omega,\\
\end{equation}
where we have renamed $V(x)-F$ to be $V(x)$ and
$g(u)=h(u^2)u$.

The quasilinear equation (\ref{simp}) in the whole
$\mathbb{R}^{N}$ has received special attention in the past
several years, see for instance \cite{Jeanjean2014, do O-Severo, Liuwang, Poppenberg-Schmitt-Wang} and
references therein. In these papers, we find important results on
the existence of nontrivial solutions of (\ref{simp}) and a good
insight into this quasilinear Schr\"{o}dinger equation. The main
strategies used are the following: the first of them consists in
by using a constrained minimization argument, which gives a
solution of (\ref{simp}) with an unknown Lagrange multiplier
$\lambda$ in front of the nonlinear term, see for example
\cite{Poppenberg-Schmitt-Wang}. The other one consists in using
a change of variables to get a new semilinear equation and
an appropriate Orlicz space framework, for more details see
\cite{Jeanjean2014, do O-Severo,  Liuwang}. In general, the existence results for  equations of the type \eqref{simp} is obtained by using  variational methods. Here, we intend to use bifurcation techniques and the sub and supersolution method in order to analyze \eqref{P}.

In addition to studies involving the operator $L_{\kappa}u$, another important motivation to study problem \eqref{P} is the fact that many papers have been devoted to study quasilinear and semilinear equations involving logistic terms, which appear naturally
in several contexts. For instance, when $\kappa=0$, problem \eqref{P} becomes the classical logistic equation with linear diffusion and refuge,
where $u(x)$ describes the density of the individuals of  species at the location $x\in\Omega$, the nonlinearity $g(x,u):=\lambda u -b(x)u^p$ is the well-known logistic reaction term. There are several papers available in the literature dedicated to the analysis of \eqref{Plog}, see for example
\cite{CC2003, Metas,Okubo,OkuboL}  and references therein.

It is worth mentioning that \eqref{P} can be seen as a quasilinear perturbation of the classical equation \eqref{Plog}, specially when $\kappa \simeq 0$. As we shall see in Theorem \ref{T1} and \ref{T2}, the presence of this quasilinear term prevents the blow-up \eqref{logInfty} that occurs with the positive solutions of \eqref{Plog}. Moreover, when $\kappa \downarrow 0$ the positive solutions of \eqref{P} tends to the positive solutions of \eqref{Plog}.

In order to study the behavior of positive solutions of problem \eqref{P} with respect to the parameter $\kappa>0$, we will assume the following assumptions on $b(x)$:
\begin{enumerate}
	\item[($b_0$)] The function $b:\overline{\Omega} \rightarrow [0,\infty)$ belongs to $C^{\alpha}(\overline{\Omega})$ for some $0<\alpha<1$;
	\item[($b_1$)] The open set $\Omega_+:=\{x \in \Omega;~b(x)>0\}$
	satisfies $\overline{\Omega}_{+} \subset \Omega$ and there is a finite number of smooth components $\Omega_{+}^{j}$, $j=1,\ldots n$, such that $\overline{\Omega_{+}^j}  \cap \overline{\Omega_{+}^i} = \emptyset$ if $i\neq j$. Moreover, the open set
	$$
	\Omega_{b,0}:= \Omega \setminus \overline{\Omega}_+
	$$
	is connected. It should be noted that $\partial \Omega_+ \subset \Omega$ and $\partial \Omega_{b,0} = \partial\Omega \cup \partial \Omega_+$.
\end{enumerate}


Before to state our main results, in a precise form, let us recall some notations. Throughout, for any function $V \in L^\infty(\Omega)$ called \textit{potential},  by $\lambda_1[-\Delta +V]$
we mean the principal eigenvalue of $-\Delta + V$ in $\Omega$ under homogeneous Dirichlet boundary conditions. By simplicity, we also use the convention
$\lambda_1 : =\lambda_1[-\Delta]$. Moreover, we will denote by  $\lambda_{b,0}$ the principal eigenvalue of $-\Delta$ in $\Omega_{b,0}$ under homogeneous Dirichlet boundary conditions.

Now, we are in position to state our main results on the problem \eqref{P}. 

\begin{theorem}\label{T1}
Let $p>1$, $\kappa>0$ and assume $(b_0)$. Then problem \eqref{P} has a positive solution if and only if $\lambda> \lambda_1$. Moreover, if $p \geq 3$ or $b(x)\equiv b>0$ is a constant,  it is unique if it exists and it will be denoted by $\Psi_{\lambda,\kappa}$. In addition, the map $
\lambda \in (\lambda_1,+\infty) \mapsto \Psi_{\lambda,\kappa}  \in \mathcal{C}_0^1(\overline{\Omega})$
is  increasing, in the sense that $\Psi_{\lambda,\kappa}> \Psi_{\mu,\kappa}$ if $\lambda> \mu > \lambda_1$.
Furthermore,
\begin{equation}\label{liml1}
\lim_{\lambda \downarrow \lambda_1}\|\Psi_{\lambda,\kappa}\|_\infty  =0
\end{equation}
and for any compact $K\subset\overline{\Omega}_{b,0} \setminus \partial\Omega$,
\begin{equation}\label{vinfinito}
    \lim_{\lambda \rightarrow +\infty} \Psi_{\lambda,\kappa} = \infty \quad \mbox{uniformly in }K.
\end{equation}
\end{theorem}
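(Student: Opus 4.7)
My plan is to convert \eqref{P} into a semilinear problem via the standard change of variables $u=f(v)$, where $f\in C^\infty(\mathbb{R})$ is the odd function with $f(0)=0$ and $f'(t)=(1+2\kappa f(t)^2)^{-1/2}$; expanding $\Delta u=f'(v)\Delta v+f''(v)|\nabla v|^2$ yields $-\Delta u-\kappa\Delta(u^2)u=-\sqrt{1+2\kappa f(v)^2}\,\Delta v$, so \eqref{P} is equivalent to
\begin{equation*}
-\Delta v \;=\; h(x,v)\;:=\;\bigl[\lambda f(v)-b(x)f(v)^{p}\bigr]f'(v)\quad\text{in }\Omega,\qquad v=0 \text{ on }\partial\Omega.
\end{equation*}
Since $f$ is an increasing $C^\infty$-diffeomorphism with $f(t)\sim t$ at $0$ and $f(t)\sim Ct^{1/2}$ at infinity, positivity, $C^1$-regularity and boundary behavior transfer between $u$ and $v$.

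The necessity $\lambda>\lambda_1$ follows by testing \eqref{P} against the positive principal eigenfunction $\varphi_1$: integrating by parts and using $\int 2u^2\nabla u\cdot\nabla\varphi_1=\tfrac{2}{3}\int\nabla(u^3)\cdot\nabla\varphi_1$ gives
\begin{equation*}
(\lambda-\lambda_1)\int_\Omega u\varphi_1\;=\;2\kappa\int_\Omega u|\nabla u|^2\varphi_1+\tfrac{2\kappa\lambda_1}{3}\int_\Omega u^3\varphi_1+\int_\Omega b(x)u^p\varphi_1\;>\;0
\end{equation*}
for any positive solution. For sufficiency I use sub- and supersolutions for the semilinear reformulation. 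The function $\underline v=\varepsilon\varphi_1$ is a strict subsolution when $\varepsilon$ is small because $h(x,\varepsilon\varphi_1)\approx\lambda\varepsilon\varphi_1>\lambda_1\varepsilon\varphi_1=-\Delta(\varepsilon\varphi_1)$. A supersolution is obtained by combining a large constant in $\Omega_+$ (where the absorption $bf^pf'$ eventually dominates) with a local large solution near $\partial\Omega_+\subset\Omega$ that controls the degenerate region $\Omega_{b,0}$; in parallel, Rabinowitz's global bifurcation at $(\lambda_1,0)$ produces an unbounded continuum of positive solutions of the semilinear problem, and the a priori $L^\infty$ control in $\Omega_+$ from the logistic term prevents any return to the trivial branch, covering the whole range $(\lambda_1,\infty)$.

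Uniqueness under $p\geq 3$ or constant $b$ follows from a Brezis--Oswald/Diaz--Saa comparison applied to the semilinear form: it suffices to verify that $v\mapsto h(x,v)/v$ is strictly decreasing on $(0,\infty)$. The linear part $\lambda f(v)f'(v)/v$ decreases from $\lambda$ to $0$ by the explicit shape of $f$, while the absorption $b(x)f(v)^{p}f'(v)/v$ behaves like $v^{(p-3)/2}$ at infinity and is non-decreasing exactly when $p\geq 3$; for constant $b$ a scaling argument supplies uniqueness without the threshold $p\geq 3$. Monotonicity in $\lambda$ is then immediate: for $\mu<\lambda$ the solution $\Psi_{\mu,\kappa}$ is a strict subsolution at level $\lambda$, so by uniqueness and the strong maximum principle $\Psi_{\mu,\kappa}<\Psi_{\lambda,\kappa}$. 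The vanishing \eqref{liml1} is part of the local Crandall--Rabinowitz bifurcation analysis at $(\lambda_1,0)$.

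The divergence \eqref{vinfinito} I would prove by contradiction. If $\Psi_{\lambda_n,\kappa}$ were bounded on a compact $K\subset\overline\Omega_{b,0}\setminus\partial\Omega$ along $\lambda_n\to\infty$, the connectedness assumed in $(b_1)$ provides a smooth subdomain $D\Subset\Omega_{b,0}$ with $K\subset D$ on which $b\equiv 0$, reducing the equation there to $-\Delta u-\kappa\Delta(u^2)u=\lambda_n u$. Testing against the positive principal eigenfunction of $D$ as in the necessity step yields $\lambda_n\leq\lambda_1(D)+C\kappa\|\Psi_{\lambda_n,\kappa}\|_{L^\infty(D)}^{2}$, forcing the $L^\infty$-norm on $D$ to blow up, and an interior Harnack inequality applied to the transformed semilinear equation propagates the divergence uniformly to $K$. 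The hardest step, in my estimation, is the sufficiency argument on the degenerate region $\Omega_{b,0}$: the usual logistic barriers break down there, so one must exploit the quasilinear correction $\kappa\Delta(u^2)u$ via a carefully constructed \emph{large solution}, precisely the tool flagged in the introduction.
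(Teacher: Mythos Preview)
Your overall strategy (dual change of variables, bifurcation at $\lambda_1$, Brezis--Oswald for $p\geq 3$, subsolution argument for monotonicity) matches the paper's, but there are two genuine gaps.

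\textbf{Supersolution / existence.} You misidentify the ``hardest step'' and propose the wrong tool. After the change of variables, the right-hand side is $\lambda f_\kappa(v)f_\kappa'(v)-b(x)f_\kappa^p(v)f_\kappa'(v)\leq \lambda f_\kappa(v)f_\kappa'(v)$, and the key property is that $t\mapsto f_\kappa(t)f_\kappa'(t)/t$ is decreasing with limit $0$ at infinity. Thus the transformed nonlinearity is globally sublinear, \emph{including on $\Omega_{b,0}$}: the quasilinear term itself supplies the damping, and no logistic absorption is needed. The paper therefore takes as supersolution simply $\overline W(\lambda)=K(\lambda)e$, where $e$ solves $-\Delta e=1$ on a slightly larger domain $\widehat\Omega\supset\supset\Omega$; the inequality $1\geq \lambda\, f_\kappa(Ke)f_\kappa'(Ke)/(Ke)\cdot e$ holds for $K$ large, uniformly in $x\in\Omega$. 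No large solutions, no patching across $\partial\Omega_+$, and crucially no assumption $(b_1)$, which Theorem~\ref{T1} does not require. The large-solution machinery in the paper is used only in Section~5 for the a priori bound on $\Omega_+$ needed in Theorem~\ref{T2}(c), not here.

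\textbf{Divergence as $\lambda\to\infty$.} Your contradiction argument tests the equation in a subdomain $D\Subset\Omega_{b,0}$ against its Dirichlet eigenfunction, but $u$ does not vanish on $\partial D$, so integration by parts produces the boundary term $\int_{\partial D}u\,\partial_\nu\psi_D<0$, which goes the wrong way and spoils the claimed inequality $\lambda_n\leq\lambda_1(D)+C\kappa\|\Psi_{\lambda_n,\kappa}\|_{L^\infty(D)}^2$. Moreover, even if the $L^\infty$-norm blows up, the Harnack step (with constants uniform in $\lambda\to\infty$) is not justified. The paper instead argues constructively: extend the principal eigenfunction $\varphi_{b,0}$ of $\Omega_{b,0}$ by zero to all of $\Omega$ and show that $\varepsilon(\lambda)\varphi_{b,0}$ is a weak subsolution of the semilinear problem provided $f_\kappa(\varepsilon\varphi_{b,0})f_\kappa'(\varepsilon\varphi_{b,0})/(\varepsilon\varphi_{b,0})\geq\lambda_{b,0}/\lambda$. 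Since $h_\kappa(t):=f_\kappa(t)f_\kappa'(t)/t$ is a decreasing bijection from $(0,\infty)$ onto $(0,1)$, one can take $\varepsilon(\lambda)=h_\kappa^{-1}(\lambda_{b,0}/\lambda)\to\infty$, giving $\Theta_{\lambda,\kappa}\geq\varepsilon(\lambda)\varphi_{b,0}$ and hence uniform blow-up on compacta. This again avoids $(b_1)$.

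Finally, your ``scaling argument'' for uniqueness when $b(x)\equiv b>0$ is not substantiated; the paper handles this case by an eigenvalue comparison: writing the difference equation as $(-\Delta+W)(v_1-v_2)=0$ and showing, via the pointwise bound $bf_\kappa^{p-1}(v)\leq\lambda$ from Lemma~\ref{1a-cota}, that the potential $-\lambda g_1g_1'/v_1+bg_1^pg_1'/v_1$ lies strictly below $W$, contradicting $\lambda_1[-\Delta+W]\leq 0$.
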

Note that we do not assume the hypothesis $(b_1)$ in this theorem. Moreover, we also analyzed the positive solution of \eqref{P} in the particular case $b(x)\equiv b >0$ constant (see Proposition~\ref{unicidade}). It should be noted that ours assumptions on the weight function $b(x)$ include the case $b \equiv 0$. Thus, Theorem~\ref{T1}  still holds for  $b \equiv0$, improving the results of \cite[Theorem 1.1]{Olimpio2016}.

Concerning the asymptotic behavior of positive solutions $\Psi_{\mu,\kappa}$ in Theorem \ref{T1}, based on the ideas in the paper \cite{DLS2002}, we will analyze  the point-wise behavior of the positive solutions of \eqref{P} with respect to $\kappa\downarrow 0$. To this, let us recall the main result concerning to problem \eqref{Plog} (see for instance Theorem 1.1 in \cite{DLS2002} and references therein).

\begin{theorem}\label{Tlog}
	Assume $(b_0)$, $(b_1)$ and $p>1$.  Then the following assertions hold:
	\begin{description}
		\item[$(a)$] The problem \eqref{Plog} has a positive solution if and only if $\lambda \in
		(\lambda_1, \lambda_{b,0})$. Moreover, it is unique if it exists and it will be denoted by $\Theta_\lambda$.
		In addition, $\Theta_\lambda$
		is a nondegenerate solution of \eqref{Plog} and the map $\lambda \in (\lambda_1,\lambda_{b,0}) \mapsto \Theta_\lambda  \in \mathcal{C}_0^1(\overline{\Omega})$
		is increasing, in the sense that $\Theta_\lambda > \Theta_\mu \quad \mbox{if}\quad \lambda_{b,0}> \lambda> \mu > \lambda_1$.
		Furthermore, for each compact  $K \subset \overline{\Omega}_{b,0}\setminus \partial\Omega$,
		\begin{equation}\label{logInfty}
		\lim_{\lambda \rightarrow \lambda_{b,0}} \Theta_\lambda = \infty \quad \mbox{uniformly in }K
		\end{equation}
		and, for each compact $K \subset \Omega_+$,
		\begin{equation}\label{loglarga}
		\lim_{\lambda \rightarrow \lambda_{b,0}} \Theta_\lambda = M_{\lambda_{b,0}} \quad \mbox{uniformly in }K,
		\end{equation}
		where $M_{\lambda_{b,0}}$ stands for the minimal positive classical solution of the singular boundary value problem
		\begin{equation}\label{Ploglargo}
		\left\{ \begin{aligned}
		-\Delta u  &= \lambda u - b(x) u^p &\mbox{in}&\quad\Omega_+,\\
		u&=\infty & \mbox{on}&\quad\partial \Omega_+,
		\end{aligned} \right.
		\end{equation}
		with $\lambda = \lambda_{b,0}$.
		\item[$(b)$] Problem \eqref{Ploglargo} possesses a minimal positive solution for each $\lambda \in \R$ and it will be denoted by $M_\lambda$.
	\end{description}
\end{theorem}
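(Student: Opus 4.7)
The plan is to follow the classical Fraile--Koch--L\'opez--de Lis picture for the logistic equation: first establish the large solutions of part (b) via a Keller--Osserman exhaustion, then use them as ingredients in the sub and supersolution construction for part (a), and invoke the D\'iaz--Saa identity for uniqueness. Throughout, the key structural features are that $g(x,s) = \lambda s - b(x) s^p$ is concave in $s^{p-1}$ on $\Omega_+$ (where $b>0$) and purely linear on $\Omega_{b,0}$ (where $b\equiv 0$), so that the operator $-\Delta-\lambda$ on $\Omega_{b,0}$ controls the upper critical value $\lambda_{b,0}$.

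\textbf{Part (b).} For each $\lambda\in\R$ and each component $\Omega_+^j$, I would exhaust by smooth subdomains $\Omega_+^{j,n}\Subset\Omega_+^j$ and on each of them solve the truncated problem with Dirichlet datum $n$. Existence follows from sub-supersolution (a small multiple of the local first eigenfunction below, a large constant above, using that $b$ is bounded away from zero on compacts of $\Omega_+^j$), and concavity gives uniqueness. Keller--Osserman interior bounds make $\{u_n\}$ locally $L^\infty$-bounded independently of $n$; elliptic regularity and a diagonal argument produce $M_\lambda$, a positive classical solution of \eqref{Ploglargo} that blows up on $\partial\Omega_+$. Minimality follows because any other large solution dominates the approximants $u_n$ by comparison.

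\textbf{Part (a): necessity, existence, uniqueness, monotonicity.} Necessity of $\lambda>\lambda_1$ is the standard test against $\varphi_1$: $(\lambda_1-\lambda)\int u\varphi_1 = -\int bu^p\varphi_1\leq 0$. Necessity of $\lambda<\lambda_{b,0}$ comes from restricting a positive solution to $\Omega_{b,0}$, where it satisfies $-\Delta u=\lambda u$ with $u>0$ on $\overline{\Omega}_{b,0}\setminus\partial\Omega$ by the strong maximum principle and Hopf, and then multiplying by the principal eigenfunction of $\Omega_{b,0}$ and integrating by parts to get the strict inequality. For existence in $(\lambda_1,\lambda_{b,0})$, I would take $\underline u=\varepsilon\varphi_1$ as subsolution and build $\bar u$ by patching $M_\mu|_{\Omega_+}$ (with $\mu$ slightly larger than $\lambda$) against a solution on $\Omega_{b,0}$ of the linear problem $-\Delta w=\lambda w$ matching large data on $\partial\Omega_+$ and vanishing on $\partial\Omega$, which is uniquely solvable because $\lambda<\lambda_{b,0}$. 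Uniqueness of $\Theta_\lambda$ then follows from the D\'iaz--Saa identity applied on $\Omega_+$, combined with the uniqueness of the linear Dirichlet problem on $\Omega_{b,0}$. Non-degeneracy of the linearization and monotonicity $\lambda\mapsto\Theta_\lambda$ are classical consequences of uniqueness plus the strong maximum principle, noting that $\Theta_\mu$ is a strict subsolution at level $\lambda>\mu$.

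\textbf{Main obstacle: the asymptotics \eqref{logInfty}--\eqref{loglarga}.} By monotonicity, $\Theta_\lambda\uparrow\Theta_*$ pointwise as $\lambda\uparrow\lambda_{b,0}$. On $\Omega_+$, the comparison $\Theta_\lambda\leq M_\lambda$, continuity of $\lambda\mapsto M_\lambda$ (itself a consequence of the monotone scheme used to produce $M_\lambda$), and interior elliptic estimates identify $\Theta_*|_{\Omega_+}$ as a large solution of \eqref{Ploglargo} at $\lambda=\lambda_{b,0}$; minimality then forces $\Theta_*=M_{\lambda_{b,0}}$ on $\Omega_+$, proving \eqref{loglarga}. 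On $\Omega_{b,0}$, if $\Theta_*$ were finite on some compact $K\subset\overline{\Omega}_{b,0}\setminus\partial\Omega$, then passing to the limit in the linear equation $-\Delta\Theta_\lambda=\lambda\Theta_\lambda$, combined with a Harnack chain from $K$ to a point near $\partial\Omega_+$ where $\Theta_*$ has just been shown to blow up, yields a contradiction; this gives \eqref{logInfty}. The genuinely delicate technical point is transferring the blow-up across the interface $\partial\Omega_+$ in a quantitative fashion and proving continuity of $\lambda\mapsto M_\lambda$ uniformly on compact subsets of $\Omega_+$, which is where the specific structure of the logistic nonlinearity and the Keller--Osserman estimate have to be used carefully.
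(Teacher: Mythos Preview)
The paper does not actually prove Theorem~\ref{Tlog}: it is stated purely as background, with the sentence ``let us recall the main result concerning problem \eqref{Plog} (see for instance Theorem~1.1 in \cite{DLS2002} and references therein)'' and no accompanying argument. So there is no ``paper's own proof'' to compare against; the authors are treating this as a known result from the Delgado--L\'opez-G\'omez--Su\'arez and Fraile--Koch--L\'opez-G\'omez--Merino--de~Lis circle of papers.

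Your sketch is in line with that literature and is essentially the standard route. Two minor remarks. First, for uniqueness you do not need to split $\Omega$ into $\Omega_+$ and $\Omega_{b,0}$: since $s\mapsto (\lambda s - b(x)s^p)/s = \lambda - b(x)s^{p-1}$ is nonincreasing on $(0,\infty)$ for every $x\in\Omega$, the Brezis--Oswald argument applies globally on $\Omega$ in one stroke (this is exactly how the paper handles the analogous uniqueness for the transformed problem \eqref{P'}). Second, your supersolution for existence---patching $M_\mu$ on $\Omega_+$ against a linear solution on $\Omega_{b,0}$---is the right picture, but making the patched function a genuine weak supersolution across $\partial\Omega_+$ requires checking the sign of the jump in the normal derivative there; this is where the blow-up of $M_\mu$ on $\partial\Omega_+$ is used, and it is the one place in your outline that would need a line or two of care in a full write-up.
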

Concerning the asymptotic behavior of positive solutions $\Psi_{\lambda,\kappa}$ given in Theorem~\ref{T1} we will prove the following result.
\begin{theorem}\label{T2}
Suppose $(b_0)$ and $(b_1)$. The following assertions are true:
\begin{description}
    \item[$(a)$] If $\lambda \in (\lambda_1, \lambda_{b,0})$ then
$\lim_{\kappa \downarrow 0} \Psi_{\lambda,\kappa} = \Theta_\lambda$ in $\mathcal{C}_0^1(\overline{\Omega});$
    \item[$(b)$] If $\lambda \geq \lambda_{b,0}$ then, for any compact $K \subset \overline{\Omega}_{b,0}\setminus \partial\Omega$,
    \begin{equation}\label{k1}
            \lim_{\kappa \downarrow 0} \Psi_{\lambda,\kappa} = +\infty \quad \mbox{uniformly in }K;
    \end{equation}
    \item[$(c)$] Suppose in addiction that $p>3$. If $\lambda \geq \lambda_{b,0}$ then, for any compact $K \subset \Omega_+$
    \begin{equation}\label{k2}
        \lim_{\kappa \downarrow 0} \Psi_{\lambda,\kappa} = M_\lambda \quad \mbox{uniformly in }K,
    \end{equation}
where $M_\lambda$ stands for the minimal positive classical solution of the singular boundary value problem \eqref{Ploglargo}.
\end{description}
\end{theorem}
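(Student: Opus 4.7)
The plan is to prove the three parts in order, building each on the previous. The main tools will be the sub--supersolution framework for the quasilinear operator $u\mapsto -\Delta u - \kappa\Delta(u^2)u$, uniform quasilinear elliptic regularity, and the uniqueness statements of Theorems~\ref{T1} and~\ref{Tlog}.

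\textbf{Part (a).} Fix $\lambda \in (\lambda_1,\lambda_{b,0})$ and pick an auxiliary $\lambda'\in(\lambda,\lambda_{b,0})$. Using $-\Delta\Theta_{\lambda'}=\lambda'\Theta_{\lambda'}-b\Theta_{\lambda'}^p$ and the identity $\Delta(\Theta_{\lambda'}^2)\Theta_{\lambda'} = 2\Theta_{\lambda'}^2\Delta\Theta_{\lambda'}+2\Theta_{\lambda'}|\nabla\Theta_{\lambda'}|^2$, a direct computation yields
\[
-\Delta\Theta_{\lambda'}-\kappa\Delta(\Theta_{\lambda'}^2)\Theta_{\lambda'}-\lambda\Theta_{\lambda'}+b\Theta_{\lambda'}^p
=\Theta_{\lambda'}\bigl[(\lambda'-\lambda)-2\kappa\bigl(|\nabla\Theta_{\lambda'}|^2+b\Theta_{\lambda'}^{p+1}-\lambda'\Theta_{\lambda'}^2\bigr)\bigr],
\]
whose right-hand side is nonnegative for $\kappa\in(0,\kappa_0]$ with $\kappa_0$ small enough; hence $\Theta_{\lambda'}$ is a supersolution of \eqref{P} uniformly in such $\kappa$. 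An analogous calculation shows that $\varepsilon\phi_1$ is a subsolution for all $\varepsilon>0$ small and all $\kappa\in(0,\kappa_0]$, where $\phi_1$ denotes the positive principal eigenfunction of $-\Delta$ in $\Omega$. Sub--supersolution theory combined with the uniqueness of $\Psi_{\lambda,\kappa}$ gives $\varepsilon\phi_1\leq\Psi_{\lambda,\kappa}\leq\Theta_{\lambda'}$ in $\Omega$. Rewriting \eqref{P} in quasilinear form $-(1+2\kappa u^2)\Delta u = \lambda u - bu^p + 2\kappa u|\nabla u|^2$ and invoking standard regularity produces a $C^{1,\alpha}(\overline\Omega)$ bound uniform in $\kappa\leq\kappa_0$. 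Any sequence $\kappa_n\downarrow 0$ then admits a subsequence along which $\Psi_{\lambda,\kappa_n}\to u_0$ in $\mathcal{C}_0^1(\overline\Omega)$; the limit solves \eqref{Plog} and satisfies $u_0\geq\varepsilon\phi_1>0$, so by the uniqueness in Theorem~\ref{Tlog}, $u_0=\Theta_\lambda$. Since every subsequence yields the same limit, the full family converges.

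\textbf{Part (b).} This is essentially a consequence of (a) and the monotonicity $\Psi_{\lambda,\kappa}\geq\Psi_{\mu,\kappa}$ for $\mu<\lambda$ from Theorem~\ref{T1}. Given a compact $K\subset\overline{\Omega}_{b,0}\setminus\partial\Omega$ and $M>0$, by \eqref{logInfty} pick $\mu\in(\lambda_1,\lambda_{b,0})$ with $\min_K\Theta_\mu>M+1$. Part (a) gives $\Psi_{\mu,\kappa}\to\Theta_\mu$ uniformly on $\overline\Omega$, so $\min_K\Psi_{\lambda,\kappa}\geq\min_K\Psi_{\mu,\kappa}>M$ for all $\kappa$ small, yielding \eqref{k1}.

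\textbf{Part (c) and main obstacle.} The hypothesis $p>3$ ensures that the absorbing term $b(x)u^p$ dominates the cubic-order quasilinear contribution $\kappa\Delta(u^2)u\sim\kappa u^3+\kappa u|\nabla u|^2$; this is precisely the Keller--Osserman threshold permitting the construction, for each small $\kappa>0$, of a minimal positive classical large solution $L_{\lambda,\kappa}$ of \eqref{P} in $\Omega_+$ with $u=\infty$ on $\partial\Omega_+$. Since $\Psi_{\lambda,\kappa}$ is finite on $\partial\Omega_+$, the quasilinear comparison principle yields $\Psi_{\lambda,\kappa}\leq L_{\lambda,\kappa}$ in $\Omega_+$. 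For the lower bound, a parallel analysis to (a), carried out on $\Omega_+$ with constant Dirichlet datum $M>0$, produces the minimal positive solution $v_{M,\kappa}$ of \eqref{P} in $\Omega_+$ and shows $v_{M,\kappa}\to v_M$ as $\kappa\downarrow 0$, where $v_M$ is the semilinear analogue, satisfying $v_M\uparrow M_\lambda$ as $M\to\infty$ by the Keller--Osserman theory for \eqref{Ploglargo}. Applying part (b) to the compact $\partial\Omega_+$ gives $\Psi_{\lambda,\kappa}\geq M$ on $\partial\Omega_+$ for $\kappa$ small, whence $\Psi_{\lambda,\kappa}\geq v_{M,\kappa}$ in $\Omega_+$ by comparison. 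Sending first $\kappa\downarrow 0$ and then $M\to\infty$ gives $\liminf_{\kappa\downarrow 0}\Psi_{\lambda,\kappa}\geq M_\lambda$, while the matching bound $\limsup_{\kappa\downarrow 0}\Psi_{\lambda,\kappa}\leq M_\lambda$ locally uniformly in $\Omega_+$ follows from the convergence $L_{\lambda,\kappa}\to M_\lambda$. The principal obstacle is precisely this last statement, namely, the construction and $\kappa\downarrow 0$ analysis of $L_{\lambda,\kappa}$: one needs Keller--Osserman estimates near $\partial\Omega_+$ that are robust as $\kappa\downarrow 0$, and this is exactly where the assumption $p>3$ is essential, to ensure $bu^p$ dominates the nonlinear diffusive term near the boundary.
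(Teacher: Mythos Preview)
Your arguments for parts (a) and (b) are correct. For (a) you take a different route than the paper: instead of your sub--supersolution plus compactness argument carried out directly on the quasilinear equation, the paper passes to the dual semilinear problem \eqref{P'} via $u=f_\kappa(v)$ and applies the Implicit Function Theorem at $(\kappa,v)=(0,\Theta_\lambda)$, using the nondegeneracy of $\Theta_\lambda$ from Theorem~\ref{Tlog}. Your approach is more hands-on and gives explicit barriers, while the paper's is shorter and automatically delivers $C^1_0$ convergence without invoking quasilinear regularity. Part (b) is essentially identical in both.

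For part (c) the strategies diverge substantially, and the paper's choice neatly avoids the obstacle you flag. You propose to build, for each $\kappa$, a large solution $L_{\lambda,\kappa}$ of the \emph{quasilinear} problem in $\Omega_+$ and then prove $L_{\lambda,\kappa}\to M_\lambda$; as you say, this demands Keller--Osserman estimates uniform in $\kappa$, which is delicate. The paper instead stays in the dual variable and compares $\Theta_{\lambda,\kappa}$ with the large solution of a \emph{single, $\kappa$-independent} semilinear problem $-\Delta v=\lambda v-b_0\,g(v)$ on balls $B_r\subset\subset\Omega_+$, where $g(t)=f_1^{p+1}(t)/t$. The key inequalities $f_\kappa(t)f_\kappa'(t)\le t$ and $f_\kappa^p(t)f_\kappa'(t)\le g(t)$ for all $0<\kappa<1$ make $\Theta_{\lambda,\kappa}$ a subsolution of this fixed problem, and the assumption $p>3$ enters only to verify Keller--Osserman for $g$. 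This gives a uniform-in-$\kappa$ bound on compact subsets of $\Omega_+$ in one stroke (Proposition~\ref{cotakappa}), after which a standard compactness argument identifies the limit as a solution of \eqref{Ploglargo}. So the paper replaces your family $\{L_{\lambda,\kappa}\}$ by one $\kappa$-free barrier, which is exactly what makes the upper bound clean; you may want to adopt this device rather than pursue the convergence of $L_{\lambda,\kappa}$.
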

We point out that the stability of the positive solutions of the dual problem associated to \eqref{P} is also  analyzed (see  Proposition~\ref{estabilidade}).

The outline of this paper is as follows: In Section~\ref{sec2}, we introduce the dual approach of \eqref{P} and prove the first results which play an important role in  our analysis. In Section~\ref{sec3}, we show the existence and uniqueness of positive solutions of \eqref{P}. In Section~\ref{sec4}, we prove a stability result. Section~\ref{sec5} is devoted to prove a pivotal a priori bounds  and in Section~\ref{sec6} we will use theses estimates to study the asymptotic behavior of the positive solution of \eqref{P} when $\kappa \downarrow 0$.

\section{An auxiliary problem}\label{sec2}
In this section we introduce the dual approach developed in the papers \cite{Jeanjean2014,Liuwang} to deal with \eqref{P}. Specifically, we convert the quasilinear equation \eqref{P} into a semilinear one by using a suitable change of variables. To this end, we argue as follows. For each $\kappa\geq 0$, let $f_\kappa:\R \rightarrow \R$ denotes the solution of the Cauchy problem
$$
f_\kappa'(t) = \frac{1}{(1+2\kappa f_\kappa^2(t))^{1/2}}, \quad f_\kappa(0)=0.
$$
By the standard theory of ODE, we obtain that $f_\kappa$ is uniquely determined, invertible and of class $\mathcal{C}^\infty(\mathbb{R},\mathbb{R})$. Moreover, it is well know that the inverse function of $f$ is given by
$$
f_\kappa^{-1}(t) :=\int_0^t (1+2\kappa s^2)^{1/2}ds, \quad \forall\ t \geq 0.
$$
Thus, by performing the change of variables $u=f_k(v)$ and setting
$g(x,s)=\lambda s-b(x)s^{p-1}$ if $s\geq0$, $x\in\Omega$ and $g(x,s)=0$ for $s<0$, $x\in\Omega$,
we obtain that problem \eqref{P} is equivalent to the following
semilinear elliptic equation
\begin{equation}\label{P'}
    \left\{ \begin{aligned}
         -\Delta v &= \lambda f_\kappa(v)f_\kappa'(v) - b(x) (f_\kappa(v))^pf_\kappa'(v) &\mbox{in}&\quad\Omega,\\
         v&=0 &\mbox{on}&\quad\partial \Omega.
    \end{aligned} \right.
\end{equation}
Furthermore, we can see that $v$ is a classical positive solution of \eqref{P'} if and only if $u=f_k(v)$ is a positive
solution of \eqref{P} (see \cite{Jeanjean2014,Liuwang}). Thus,  we will analyze the
auxiliary problem \eqref{P'}.

In order to study problem \eqref{P'}, we need to establish some properties of the change of variable $f_\kappa(t)$. Firstly, we recall some useful properties of $f_\kappa(t)$ (see for instance \cite{watanabe2012,Jeanjean2014}).
\begin{lemma}\label{lema:f}
    Let $\kappa>0$ and $t \geq 0$. Then
    \begin{description}
      \item[$(i)$] $0 \leq f_\kappa(t)\leq t$;
      \item[$(ii)$] $0  \leq  f_\kappa'(t) \leq 1$;
      \item[$(iii)$] $f_\kappa(t)f_\kappa'(t) \leq 1/\sqrt{2\kappa}$;
      \item[$(iv)$] $f_\kappa''(t) = - 2 \kappa f_\kappa(t) (f_\kappa'(t))^4 = [(f_\kappa'(t))^4 - (f_\kappa'(t))^2]/f_\kappa(t)$;
      \item[$(v)$] $\frac{1}{2} f_\kappa(t) \leq t f_\kappa'(t) \leq f_\kappa(t)$;
      \item[$(vi)$] $\displaystyle\lim_{t \rightarrow 0^+}f_\kappa(t)/t=1$;
      \item[$(vii)$] The map $t \in (0,\infty) \mapsto f_\kappa(t)/t^{1/2}$ is nondecreasing.
    \end{description}
\end{lemma}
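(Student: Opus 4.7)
The plan is to exploit the defining ODE $f_\kappa'=(1+2\kappa f_\kappa^2)^{-1/2}$, $f_\kappa(0)=0$, for the pointwise estimates, together with the closed-form integral representation $f_\kappa^{-1}(s)=\int_0^s\sqrt{1+2\kappa\tau^2}\,d\tau$ for the two-sided comparisons in (v) and their consequence (vii). Items (i)--(iv) are obtained by direct manipulation of the ODE; the nontrivial work is concentrated in (v), from which (vii) follows effortlessly.

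I would treat (ii) first: the ODE gives $f_\kappa'(t)=(1+2\kappa f_\kappa^2(t))^{-1/2}\in(0,1]$ for every $t\ge 0$. Integrating from $0$ and using $f_\kappa(0)=0$ yields (i). For (iii), rewrite $f_\kappa(t)f_\kappa'(t)=f_\kappa(t)/\sqrt{1+2\kappa f_\kappa^2(t)}$ and use the trivial bound $\sqrt{1+2\kappa y^2}\ge \sqrt{2\kappa}\,y$ with $y=f_\kappa(t)$. For (iv), differentiate $f_\kappa'=(1+2\kappa f_\kappa^2)^{-1/2}$ once to get $f_\kappa''=-2\kappa f_\kappa f_\kappa'(1+2\kappa f_\kappa^2)^{-3/2}=-2\kappa f_\kappa (f_\kappa')^4$. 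The second equality in (iv) is a purely algebraic identity: squaring the ODE gives $(f_\kappa')^2-1=-2\kappa f_\kappa^2(f_\kappa')^2$, multiply by $(f_\kappa')^2$ and divide by $f_\kappa$.

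For (v) I would use $t=f_\kappa^{-1}(f_\kappa(t))=\int_0^{f_\kappa(t)}\sqrt{1+2\kappa\tau^2}\,d\tau$. The upper bound $tf_\kappa'(t)\le f_\kappa(t)$ comes from the monotonicity of $\tau\mapsto \sqrt{1+2\kappa\tau^2}$, which gives $t\le f_\kappa(t)\sqrt{1+2\kappa f_\kappa^2(t)}=f_\kappa(t)/f_\kappa'(t)$. The lower bound $f_\kappa(t)\le 2tf_\kappa'(t)$ is equivalent to
\[
 2\int_0^{s}\sqrt{1+2\kappa\tau^2}\,d\tau\ \ge\ s\sqrt{1+2\kappa s^2}\quad\text{with}\ s=f_\kappa(t).
\]
To prove this, set $h(s):=2\int_0^{s}\sqrt{1+2\kappa\tau^2}\,d\tau-s\sqrt{1+2\kappa s^2}$ and compute directly $h'(s)=1/\sqrt{1+2\kappa s^2}>0$ together with $h(0)=0$. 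This auxiliary-function trick is the main obstacle of the lemma; once (v) is in hand, (vii) is immediate because $\frac{d}{dt}\bigl(f_\kappa(t)t^{-1/2}\bigr)=(2tf_\kappa'(t)-f_\kappa(t))/(2t^{3/2})\ge 0$. Finally, (vi) is nothing but $f_\kappa'(0)=1$, either by L'H\^opital or by the Taylor expansion of $f_\kappa$ at the origin.
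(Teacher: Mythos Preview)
Your proof is correct in every item; in particular, the auxiliary-function computation $h'(s)=1/\sqrt{1+2\kappa s^2}$ for the lower bound in (v) is clean and decisive, and (vii) then drops out as you say. The paper does not actually prove this lemma: it merely states the properties and cites \cite{watanabe2012,Jeanjean2014}, so there is no proof of its own to compare against. Your argument therefore supplies a self-contained verification that the paper defers to the literature; nothing needs to be changed.
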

As a consequence of Lemma~\ref{lema:f}, we also have the following properties.

\begin{lemma}\label{Prop:f} Assume that $\kappa>0$ and $p>1$. Then
\begin{description}
    \item[$(i)$] The map $t \in (0,+\infty) \mapsto f_\kappa(t) f_\kappa'(t)/t$ is of
    class $\mathcal{C}^1$, decreasing and it verifies
    \begin{equation}\label{cotaff'}
        f_\kappa(t) f_\kappa'(t) \leq t, \quad \forall\ t \geq 0,
    \end{equation}
    \begin{equation}\label{lim0}
        \lim_{t \rightarrow 0^+} \frac{f_\kappa(t) f_\kappa'(t)}{t} = 1
    \end{equation}
    and
    \begin{equation}\label{liminfity}
            \lim_{t \rightarrow \infty} \frac{f_\kappa(t) f_\kappa'(t)}{t} = 0;
    \end{equation}
    \item[$(ii)$] For $p\geq3$, the map $t \in (0,\infty) \mapsto f_\kappa^p(t)f_\kappa'(t)/t$ is of class $\mathcal{C}^1$, increasing and it verifies
    \begin{equation}\label{limp}
         \lim_{t \rightarrow 0^+} \frac{f_\kappa^p(t) f_\kappa'(t)}{t}  = 0.
    \end{equation}
    \end{description}
\end{lemma}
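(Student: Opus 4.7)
The plan is to prove both parts by direct differentiation, reducing the monotonicity statements to inequalities already established in Lemma~\ref{lema:f}, and by extracting the limits from items (iii) and (vi) of that lemma. Smoothness on $(0,\infty)$ is free in both parts, since $f_\kappa\in\mathcal{C}^\infty(\mathbb{R})$ and the denominator $t$ does not vanish; the work lies in the sign of the derivative and the limits.

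For part (i), I would set $h(t):=f_\kappa(t)f_\kappa'(t)/t$ and first derive the clean identity
\begin{equation*}
(f_\kappa'(t))^2+f_\kappa(t)f_\kappa''(t)=(f_\kappa'(t))^4,
\end{equation*}
which comes directly from combining Lemma~\ref{lema:f}(iv) with the ODE $(f_\kappa')^2=1/(1+2\kappa f_\kappa^2)$. Plugging this into $h'(t)$ gives $h'(t)=f_\kappa'(t)\bigl[t(f_\kappa'(t))^3-f_\kappa(t)\bigr]/t^2$, and strict monotonicity then follows from Lemma~\ref{lema:f}(v), which yields $tf_\kappa'\leq f_\kappa$, combined with $f_\kappa'(t)<1$ strict for $t>0$, forcing $t(f_\kappa'(t))^3<f_\kappa(t)$. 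The limit \eqref{lim0} is immediate from Lemma~\ref{lema:f}(vi) together with $f_\kappa'(0)=1$; the bound \eqref{cotaff'} then drops out from $h\leq 1$ since $h$ is decreasing with limit $1$ at $0$; finally \eqref{liminfity} follows from Lemma~\ref{lema:f}(iii) since $h(t)\leq 1/(\sqrt{2\kappa}\,t)$.

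For part (ii), with $H(t):=f_\kappa(t)^pf_\kappa'(t)/t$, the same kind of computation yields
\begin{equation*}
H'(t)=\frac{f_\kappa(t)^{p-1}}{t}\Bigl[p(f_\kappa')^2+f_\kappa f_\kappa''-\tfrac{f_\kappa f_\kappa'}{t}\Bigr].
\end{equation*}
Using the ODE, the first two terms inside the bracket combine to $[p+2\kappa(p-1)f_\kappa^2]/(1+2\kappa f_\kappa^2)^2$. The delicate step is to find a sharp enough upper bound for $f_\kappa f_\kappa'/t$: the naive bound $\leq 1$ is too weak, but the inequality $f_\kappa/t\leq 2f_\kappa'$ from Lemma~\ref{lema:f}(v) gives $f_\kappa f_\kappa'/t\leq 2(f_\kappa')^2=2/(1+2\kappa f_\kappa^2)$. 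Inserting this yields the lower bound $\bigl[(p-2)+2\kappa(p-3)f_\kappa^2\bigr]/(1+2\kappa f_\kappa^2)^2$ for the bracket, which is strictly positive exactly when $p\geq 3$; hence $H$ is strictly increasing. For \eqref{limp}, I would rewrite $H(t)=f_\kappa(t)^{p-1}\cdot(f_\kappa(t)/t)\cdot f_\kappa'(t)$ and invoke Lemma~\ref{lema:f}(vi) together with $p>1$.

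The only non-routine point is the choice of the right upper bound for $f_\kappa f_\kappa'/t$ in part (ii); it is precisely the threshold $p=3$ in the hypothesis that makes this bound sufficient. Everything else is a bookkeeping exercise using the ODE satisfied by $f_\kappa$ and the inequalities already recorded in Lemma~\ref{lema:f}.
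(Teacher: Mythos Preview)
Your proposal is correct and follows essentially the same route as the paper: both arguments differentiate $f_\kappa f_\kappa'/t$ and $f_\kappa^p f_\kappa'/t$ directly, simplify via the identity coming from Lemma~\ref{lema:f}(iv) (your clean form $(f_\kappa')^2+f_\kappa f_\kappa''=(f_\kappa')^4$ is exactly what the paper uses implicitly), and then reduce the sign of the derivative to the inequality $\tfrac12 f_\kappa\le t f_\kappa'\le f_\kappa$ from Lemma~\ref{lema:f}(v); the limits are likewise obtained from items (iii) and (vi) in both proofs. Your presentation in part~(ii), rewriting in terms of $1/(1+2\kappa f_\kappa^2)$ to display the lower bound $[(p-2)+2\kappa(p-3)f_\kappa^2]/(1+2\kappa f_\kappa^2)^2$, is algebraically equivalent to the paper's inequality $t(f_\kappa')^4+(p-1)t(f_\kappa')^2>f_\kappa f_\kappa'$ and makes the threshold $p=3$ equally transparent.
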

\begin{proof}
To prove that $t \in (0,+\infty) \mapsto f_\kappa(t) f_\kappa'(t)/t$ is decreasing note that, for all $t>0$,
$$
\begin{aligned}
\left(\frac{f_\kappa(t) f_\kappa'(t)}{t} \right)' &= \frac{[(f_\kappa'(t))^2 + f_\kappa(t)f_\kappa''(t)]t
- f_\kappa(t)f_\kappa'(t)}{t^2}\\
&=\frac{[(f_\kappa'(t))^2 - 2(f_\kappa(t))^2(f_\kappa'(t))^5]t - f_\kappa(t)f_\kappa'(t)}{t^2}<0,
\end{aligned}
$$
if and only if,
$
tf_\kappa'(t) < 2t(f_\kappa(t))^2(f_\kappa'(t))^4 + f_\kappa(t),
$
which is true, thanks to Lemma~\ref{lema:f} $(i)$, $(ii)$ and $(v)$. The inequality \eqref{cotaff'} is a direct consequence of
Lemma~\ref{lema:f} $(i)$  and $(ii)$. The limit \eqref{lim0} is obtained by combining Lemma~\ref{lema:f} $(vi)$ and using that
$$
\lim_{t \rightarrow 0^+} f_\kappa'(t) =  \lim_{t \rightarrow 0^+} \frac{1}{(1+2\kappa f_\kappa^2(t))^{1/2}} = 1.
$$
The limit \eqref{liminfity} follows from Lemma~\ref{lema:f} $(iii)$.

Now, suppose that $p\geq3$. To prove that the map $t \in [0,\infty) \mapsto  f_\kappa^p(t)f_\kappa'(t)/t$ is increasing,
we observe that, by Lemma~\ref{lema:f} $(iv)$, for all $t>0$ we have
$$
\begin{aligned}
\left(\frac{f_\kappa^p(t) f_\kappa'(t)}{t} \right)' &= \frac{[p(f_\kappa(t))^{p-1}(f_\kappa'(t))^2
+ f_\kappa^p(t)f_\kappa''(t)]t - f_\kappa^p(t)f_\kappa'(t)}{t^2}\\
&=\frac{[p(f_\kappa(t))^{p-1}(f_\kappa'(t))^2 +(f_\kappa(t))^{p-1}((f_\kappa'(t))^4 -
 (f_\kappa'(t))^2)]t - f_\kappa^p(t)f_\kappa'(t)}{t^2}>0,
\end{aligned}
$$
if and only if, $[p(f_\kappa'(t))^2 +(f_\kappa'(t))^4 - (f_\kappa'(t))^2]t - f_\kappa(t)f_\kappa'(t)>0$, that is,
\begin{equation}\label{753}
         t(f_\kappa'(t))^4 +(p-1)t(f_\kappa'(t))^2>f_\kappa(t)f_\kappa'(t).
\end{equation}
On the other hand, since $p\geq3$, it follows from Lemma \ref{lema:f} $(v)$ that
$$
tf_\kappa'(t) \geq \frac{f_\kappa(t)}{2} \geq \frac{f_\kappa(t)}{p-1}, \quad \forall\ t \geq 0.
$$
Now, using Lemma~\ref{lema:f} $(ii)$ in combination with the fact that $t(f_\kappa'(t))^4 >0$ for all $t>0$,
we conclude that \eqref{753} is true. Finally, \eqref{limp} is an easy consequence of  \eqref{lim0} and $\lim_{t \rightarrow 0^+} f^{p-1}(t)=f^{p-1}(0)=0$.
\end{proof}

With respect to the map $\kappa \in (0,\infty) \mapsto f_\kappa(t)$ (for each $t \geq 0$), we have the following lemma.
\begin{lemma}
For each $t> 0$, the function  $\kappa \in (0,\infty) \mapsto f_\kappa(t)$ is continuous and decreasing.
\end{lemma}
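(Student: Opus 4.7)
My plan is to avoid dealing with the Cauchy problem directly and instead exploit the explicit inverse formula $f_\kappa^{-1}(t)=\int_0^t (1+2\kappa s^2)^{1/2}\,ds$. The integrand is strictly increasing in $\kappa$ for every fixed $s>0$, so $\kappa\mapsto f_\kappa^{-1}(y)$ is strictly increasing for each $y>0$. Since every $f_\kappa$ is a strictly increasing bijection of $[0,\infty)$, inverting this monotonicity will give the desired (reversed) monotonicity of $f_\kappa(t)$ in $\kappa$.

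To make the monotonicity step precise, I would fix $0<\kappa_1<\kappa_2$ and $t>0$ and set $y_i:=f_{\kappa_i}(t)$, so that $f_{\kappa_i}^{-1}(y_i)=t$. Applying $f_{\kappa_2}^{-1}$ to $y_1$ yields $f_{\kappa_2}^{-1}(y_1)>f_{\kappa_1}^{-1}(y_1)=t=f_{\kappa_2}^{-1}(y_2)$, and the strict monotonicity of $f_{\kappa_2}^{-1}$ then forces $y_1>y_2$, i.e.\ $f_{\kappa_1}(t)>f_{\kappa_2}(t)$.

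For continuity, I would fix $t>0$, take a sequence $\kappa_n\to\kappa$ in $(0,\infty)$, and set $y_n:=f_{\kappa_n}(t)$. By Lemma~\ref{lema:f}$(i)$ one has $y_n\in[0,t]$, so the sequence is precompact. Along any convergent subsequence $y_{n_k}\to y^*$, the functions $(1+2\kappa_{n_k}s^2)^{1/2}\chi_{[0,y_{n_k}]}(s)$ are bounded uniformly on $[0,t]$ by a constant depending only on $\sup_k\kappa_{n_k}$ and $t$, and they converge pointwise to $(1+2\kappa s^2)^{1/2}\chi_{[0,y^*]}(s)$. Dominated convergence then gives
\[
t=\int_0^{y_{n_k}}(1+2\kappa_{n_k}s^2)^{1/2}\,ds\longrightarrow\int_0^{y^*}(1+2\kappa s^2)^{1/2}\,ds,
\]
so $y^*=f_\kappa(t)$. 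Since every convergent subsequence has the same limit, the whole sequence $y_n$ converges to $f_\kappa(t)$.

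The whole argument is elementary; the only delicate point is the monotonicity inversion, where one must resist arguing circularly from $f_{\kappa_1}^{-1}<f_{\kappa_2}^{-1}$ to $f_{\kappa_1}>f_{\kappa_2}$ without a clean monotonicity-of-a-fixed-inverse step. The formulation above, using strict monotonicity of $f_{\kappa_2}^{-1}$ on the final line, bypasses this issue cleanly.
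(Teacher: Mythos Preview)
Your proof is correct and follows essentially the same strategy as the paper: both exploit the explicit formula for $f_\kappa^{-1}$, use that $\kappa\mapsto f_\kappa^{-1}(y)$ is strictly increasing, and then invert via the strict monotonicity of a fixed $f_{\kappa_2}^{-1}$. Your monotonicity step is in fact a bit cleaner than the paper's (which introduces an auxiliary function $h(t)=f_{\kappa_2}^{-1}(f_{\kappa_1}(t))-t$ and differentiates in $t$ to reach the same conclusion), and for continuity you give an explicit dominated-convergence argument where the paper simply invokes continuous dependence on parameters from ODE theory.
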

\begin{proof}
The continuity of the map $\kappa \in (0,\infty) \mapsto f_\kappa(t)$ follows from the standard theory of ordinary differential equations. To prove that it is decreasing, we argue as follows.  Let $\kappa_1,\kappa_2$ be constants such that $0<\kappa_1<\kappa_2$. We need prove that $f_{\kappa_2}(t) < f_{\kappa_1}(t)$ for all $t> 0$.
Since, for each $t>0$, the function $\kappa \mapsto f_\kappa^{-1}(t) = \int_0^t (1+2\kappa s^2)^{1/2}ds$ is increasing, it suffices to prove that
\begin{equation}\label{tia}
f_{\kappa_2}^{-1}(f_{\kappa_2}(t)) <f_{\kappa_2}^{-1}(f_{\kappa_1}(t)),
\end{equation}
which is equivalent to $t < \int_0^{f_{\kappa_1}(t)}(1+2\kappa_2 s^2)^{1/2}ds$. To this, consider the function defined by
$$
h(t)= \int_0^{f_{\kappa_1}(t)}(1+2\kappa_2 s^2)^{1/2}ds - t, \quad t \geq 0
$$
and notice that $h(0)=0$. We claim that $h'(t)>0$ for all $t>0$ which implies that $h(t)>0$ and hence \eqref{tia} holds. Indeed, observe that
$h'(t) = (1+2\kappa_2 f_{\kappa_1}^2(t))^{1/2} f_{\kappa_1}'(t) - 1>0$
if and only if
$$
\frac{1}{(1+2\kappa_1 f_{\kappa_1}^2(t))^{1/2}}=f_{\kappa_1}'(t) > \frac{1}{(1+2\kappa_2 f_{\kappa_1}^2(t))^{1/2}},
$$
which holds if $\kappa_1<\kappa_2$ and this completes the proof.
\end{proof}
We finish this section by deriving an \textit{a priori} estimate for positive solutions of \eqref{P'} in the particular
case $b(x) \equiv b>0$. This estimate will be useful to prove a nonexistence result in the next section.

\begin{lemma}\label{1a-cota}
Let $v \in \mathcal{C}^{2}(\overline{\Omega})$ be a positive solution of \eqref{P'} with $b(x) \equiv b> 0$ constant. Then
\begin{equation}\label{a priori}
bf_\kappa^{p-1}(v(x)) \leq \lambda, \quad \forall\ x \in \Omega.
\end{equation}
\end{lemma}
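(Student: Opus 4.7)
\textbf{Proof plan for Lemma~\ref{1a-cota}.}

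The plan is a straightforward maximum principle argument applied at an interior maximum of $v$. I would first rewrite the equation in \eqref{P'} (with $b(x) \equiv b$) in the factored form
$$
-\Delta v(x) = f_\kappa(v(x))\, f_\kappa'(v(x))\,\bigl[\lambda - b\, f_\kappa^{p-1}(v(x))\bigr],\qquad x\in\Omega,
$$
so that the sign of $-\Delta v$ is governed entirely by the bracketed factor, since by Lemma~\ref{lema:f}\,$(ii)$ and the fact that $f_\kappa$ is strictly increasing with $f_\kappa(0)=0$, both $f_\kappa(v)$ and $f_\kappa'(v)$ are strictly positive wherever $v>0$.

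Next, since $v\in\mathcal C^2(\overline\Omega)$, $v>0$ in $\Omega$, and $v=0$ on $\partial\Omega$, the function $v$ attains its global maximum at some interior point $x_0\in\Omega$. At $x_0$ one has $-\Delta v(x_0)\ge 0$. Substituting into the factored equation and using that $f_\kappa(v(x_0))f_\kappa'(v(x_0))>0$, I obtain
$$
\lambda - b\, f_\kappa^{p-1}(v(x_0)) \;\ge\; 0,
$$
that is, $b\, f_\kappa^{p-1}(v(x_0))\le\lambda$.

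Finally, because $f_\kappa$ is strictly increasing on $[0,\infty)$ (its derivative is positive by the Cauchy problem defining $f_\kappa$), and $v(x_0)=\max_{\overline\Omega} v$, we have $f_\kappa(v(x))\le f_\kappa(v(x_0))$ for every $x\in\Omega$, and hence $b\, f_\kappa^{p-1}(v(x))\le b\, f_\kappa^{p-1}(v(x_0))\le \lambda$ throughout $\Omega$, which is \eqref{a priori}.

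There is no real obstacle here: the argument is just the classical ``evaluate at the maximum'' trick. The only points requiring a word of care are (i)~checking that the maximum is attained in the interior (immediate from $v>0$ in $\Omega$ and $v=0$ on $\partial\Omega$), and (ii)~dividing by the positive factor $f_\kappa(v(x_0))f_\kappa'(v(x_0))$, which is legitimate precisely because $v(x_0)>0$ and $f_\kappa,f_\kappa'>0$ on $(0,\infty)$.
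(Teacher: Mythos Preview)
Your proof is correct and follows essentially the same approach as the paper: evaluate the equation at an interior maximum $x_0$ of $v$, use $-\Delta v(x_0)\ge 0$ together with the positivity of $f_\kappa(v(x_0))f_\kappa'(v(x_0))$ to deduce $b f_\kappa^{p-1}(v(x_0))\le\lambda$, and then extend to all of $\Omega$ by the monotonicity of $f_\kappa$. The only cosmetic difference is that you explicitly factor the right-hand side first, whereas the paper writes the inequality directly.
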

\begin{proof}
Let $v$ be a classical positive solution of \eqref{P'}. Since the maximum value of $v$ in $\overline{\Omega}$  is attained in $\Omega$, let $x_0\in \Omega$ be such that $v(x_0)= \max_{x \in \overline{\Omega}}v(x)$. Thus,
$$
0 \leq -(\Delta v)(x_0) = \lambda f_\kappa(v(x_0))f_\kappa'(v(x_0)) - b f_\kappa^pv(x_0))f_\kappa'(v(x_0))
$$
and as $f_\kappa(v(x_0))f_\kappa'(v(x_0)) > 0$, the previous inequality is equivalent to
$
bf_\kappa^{p-1} (v(x_0)) \leq \lambda.
$
Using that $f_\kappa(t)$ is increasing for $t>0$, we obtain
$bf_\kappa^{p-1}(v(x)) \leq bf_\kappa^{p-1}(v(x_0)) \leq \lambda$ for all $x \in \Omega$, and this completes the proof.
\end{proof}

\section{Existence, nonexistence and uniqueness of positive solutions}\label{sec3}

In this section, we will study the existence, nonexistence and uniqueness of positive solutions of \eqref{P'}.
We begin by establishing a necessary condition for existence of positive solution for \eqref{P'} (and hence for \eqref{P}).

\begin{lemma}[Nonexistence]\label{nexistencia}
If $(b_0)$ holds then problem \eqref{P'} does not have positive solutions for $\lambda\leq\lambda_1$. In particular, if $b(x)\equiv 0$ then problem \eqref{P'} does not have positive solutions for $\lambda\leq\lambda_1$.
\end{lemma}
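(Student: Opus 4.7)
The plan is to assume for contradiction that $v$ is a classical positive solution of \eqref{P'} with $\lambda\leq\lambda_1$, and to derive an incompatibility with the variational characterization of the principal eigenvalue $\lambda_1$ of $-\Delta$ on $\Omega$. The strategy is to test the equation against the positive principal eigenfunction $\varphi_1$, exploit the nonnegativity of $b$ together with the strict sub-linearity of $f_\kappa(t)f_\kappa'(t)$ in the sense that $f_\kappa(t)f_\kappa'(t)<t$ for $t>0$, and conclude $\lambda>\lambda_1$.

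More precisely, first I would observe that since $b(x)\geq 0$ by $(b_0)$ and $f_\kappa(v),f_\kappa'(v)>0$ wherever $v>0$, the equation \eqref{P'} yields the pointwise inequality
\begin{equation*}
-\Delta v \,\leq\, \lambda f_\kappa(v)f_\kappa'(v) \quad \text{in } \Omega.
\end{equation*}
Next, I would invoke Lemma \ref{Prop:f}$(i)$: the map $t\mapsto f_\kappa(t)f_\kappa'(t)/t$ is strictly decreasing on $(0,\infty)$ with $\lim_{t\to 0^+}f_\kappa(t)f_\kappa'(t)/t=1$, hence $f_\kappa(t)f_\kappa'(t)<t$ for every $t>0$. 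Since $v>0$ in $\Omega$, this gives the strict pointwise bound $-\Delta v < \lambda v$ in $\Omega$.

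Then I would multiply this strict inequality by the principal eigenfunction $\varphi_1>0$ associated with $\lambda_1$ (normalized to, say, $\|\varphi_1\|_2=1$), which satisfies $-\Delta\varphi_1=\lambda_1\varphi_1$ in $\Omega$ and $\varphi_1=0$ on $\partial\Omega$. Integrating by parts (both $v$ and $\varphi_1$ vanish on $\partial\Omega$), I obtain
\begin{equation*}
\lambda_1 \int_\Omega v\,\varphi_1\,\ud x \;=\; \int_\Omega (-\Delta v)\,\varphi_1\,\ud x \;<\; \lambda \int_\Omega v\,\varphi_1\,\ud x.
\end{equation*}
Since $v,\varphi_1>0$ in $\Omega$, the integral $\int_\Omega v\,\varphi_1\,\ud x$ is strictly positive, so dividing through yields $\lambda_1<\lambda$, contradicting $\lambda\leq\lambda_1$. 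The particular case $b\equiv 0$ is included without any modification.

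The only subtle point, which I would flag as the mildest obstacle, is ensuring the strict inequality in the chain above. It is preserved because $f_\kappa(t)f_\kappa'(t)<t$ is \emph{strict} for every $t>0$ (a consequence of the strict monotonicity plus the boundary value at $0$ supplied by Lemma \ref{Prop:f}$(i)$), and because $v\varphi_1>0$ on an open set, so integrating a strict pointwise inequality against the positive weight $\varphi_1$ yields a strict integral inequality. No compactness or regularity issue arises beyond classical integration by parts on smooth domains.
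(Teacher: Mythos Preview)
Your proof is correct and uses the same key ingredients as the paper's: the nonnegativity of $b$ and the strict inequality $f_\kappa(t)f_\kappa'(t)<t$ from Lemma~\ref{Prop:f}$(i)$. The only difference is packaging: the paper rewrites \eqref{P'} as $(-\Delta+\widetilde b(x))v=0$ with $\widetilde b(x)=b(x)f_\kappa^p(v)f_\kappa'(v)/v-\lambda f_\kappa(v)f_\kappa'(v)/v$ and invokes monotonicity of the principal eigenvalue in the potential to get $0>\lambda_1[-\Delta-\lambda]=\lambda_1-\lambda$, whereas you carry out the concrete version of that monotonicity by testing directly against $\varphi_1$; the two arguments are equivalent.
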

\begin{proof}
Suppose that $v>0$ is a solution of \eqref{P'} with $\lambda \leq \lambda_1$. Then, it satisfies
\begin{equation*}
    \left\{ \begin{aligned}
         -\Delta v + \widetilde{b}(x)v &= 0  &\mbox{in}&\quad\Omega,\\
         v&=0  &\mbox{on}&\quad\partial \Omega,
    \end{aligned} \right.
\end{equation*}
where
$$
\widetilde{b}(x):=b(x) \frac{f_\kappa^p(v(x))f_\kappa'(v(x))}{v(x)} - \lambda \frac{f_\kappa(v(x))f_\kappa'(v(x))}{v(x)}.
$$
Thus, we can infer that
$\lambda_1[ -\Delta  + \widetilde{b}(x) ]=0$.
Since $b(x) f_\kappa^p(v(x))f_\kappa'(v(x))/v(x)\geq0$, using the monotonicity properties of the principal eigenvalue
combined with \eqref{cotaff'}, we conclude that $0>\lambda_1\left[ -\Delta   - \lambda \right] = \lambda_1 - \lambda$,
which is a contradiction and this ends the proof.
\end{proof}

The next result shows an uniqueness result of positive solutions of \eqref{P'}.

\begin{proposition}[Uniqueness]\label{unicidade}
Suppose $p\geq 3$ or $b(x) \equiv b>0$. Then the problem \eqref{P'} admits at most a positive solution.
\end{proposition}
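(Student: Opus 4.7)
The plan is to apply the Picone-type Diaz--Saa identity. Write \eqref{P'} as $-\Delta v = g(x,v)$ with
$$
g(x,t) := \lambda f_\kappa(t)f_\kappa'(t) - b(x) f_\kappa^p(t) f_\kappa'(t),
$$
and suppose $v_1, v_2 \in \mathcal{C}^2(\overline{\Omega})$ are two positive classical solutions. By the strong maximum principle and Hopf's lemma, both are strictly positive in $\Omega$ with nonvanishing inward normal derivative on $\partial\Omega$, so $v_1^2/v_2$ and $v_2^2/v_1$ are admissible test functions. The Diaz--Saa inequality then delivers
$$
\int_\Omega \left( \frac{g(x,v_1)}{v_1} - \frac{g(x,v_2)}{v_2} \right)(v_1^2 - v_2^2)\, dx \geq 0.
$$
Since $v_1^2 - v_2^2 = (v_1-v_2)(v_1+v_2)$ with $v_1, v_2 > 0$, it suffices to show that $t \mapsto g(x,t)/t$ is strictly decreasing on the range of values $\{v_1(x), v_2(x)\}$ for each $x$: the integrand will then be $\leq 0$ pointwise with strict inequality on $\{v_1 \neq v_2\}$, forcing $v_1 \equiv v_2$.

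\textbf{Case $p \geq 3$.} Split
$$
\frac{g(x,t)}{t} = \lambda\, \frac{f_\kappa(t) f_\kappa'(t)}{t} - b(x)\, \frac{f_\kappa^p(t) f_\kappa'(t)}{t}.
$$
By Lemma~\ref{Prop:f}(i) the first summand is strictly decreasing in $t > 0$, while by Lemma~\ref{Prop:f}(ii) together with $b(x) \geq 0$ the second summand is nondecreasing. So $t \mapsto g(x,t)/t$ is strictly decreasing on all of $(0,\infty)$ for every $x$, and we conclude.

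\textbf{Case $b(x) \equiv b > 0$.} Here Lemma~\ref{Prop:f}(ii) fails when $1 < p < 3$, so I invoke the a priori bound from Lemma~\ref{1a-cota}: every positive solution satisfies $b f_\kappa^{p-1}(v(x)) \leq \lambda$. Factoring,
$$
\frac{g(t)}{t} = \frac{f_\kappa(t) f_\kappa'(t)}{t}\,\bigl[\lambda - b f_\kappa^{p-1}(t)\bigr],
$$
both factors are strictly decreasing in $t$, and the a priori bound applied to $v_1(x)$ and $v_2(x)$ together with the monotonicity of $f_\kappa^{p-1}$ shows that the bracket remains $\geq 0$ throughout the interval $[\min(v_1(x), v_2(x)), \max(v_1(x), v_2(x))]$. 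On this interval the product of a strictly decreasing positive function and a strictly decreasing nonnegative function is strictly decreasing, which gives the required pointwise inequality and again $v_1 \equiv v_2$.

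The main subtlety is the constant-$b$, subcritical case $p \in (1,3)$: globally $t \mapsto g(t)/t$ need \emph{not} be monotone (once the bracket turns negative, the sign of the derivative of the product is no longer forced), and it is Lemma~\ref{1a-cota} that crucially confines both solutions to the monotone regime. Everything else---the validity of the Diaz--Saa identity in this regularity class and the plain application of Lemma~\ref{Prop:f}---is routine.
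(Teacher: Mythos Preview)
Your argument is correct. For the case $p\geq 3$ you and the paper do exactly the same thing: show that $t\mapsto g(x,t)/t$ is strictly decreasing via Lemma~\ref{Prop:f}, and then invoke a Brezis--Oswald/Diaz--Saa type principle.

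For the constant-$b$ case the two proofs share the same core idea but differ in packaging. Both rely on the factorization
\[
\frac{g(t)}{t}=\frac{f_\kappa(t)f_\kappa'(t)}{t}\bigl[\lambda-b\,f_\kappa^{p-1}(t)\bigr]
\]
and on Lemma~\ref{1a-cota} to keep the bracket nonnegative on the range of any solution, so that the product of the two decreasing factors is itself decreasing there. You feed this directly into the Diaz--Saa inequality and finish in one stroke. The paper instead runs an eigenvalue comparison: it writes $-\Delta(v_1-v_2)+W(x)(v_1-v_2)=0$ for a difference-quotient potential $W$, deduces $\lambda_1[-\Delta+W]\leq 0$, and then shows $-\lambda g_1g_1'/v_1+b\,g_1^pg_1'/v_1\leq W$ (which is precisely the pointwise monotonicity statement you use) to force a contradiction via monotonicity of the principal eigenvalue. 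Your route is shorter and treats both cases uniformly; the paper's eigenvalue argument is more hands-on but ultimately encodes the same inequality.
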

\begin{proof}
First we will consider the case $p \geq 3$. By the classical Brezis-Oswald result (see \cite{BO}), it is sufficient to prove that the function
$$
h(x,t):=\lambda\frac{f_\kappa(t)f_\kappa'(t)}{t} - b(x)\frac{f_\kappa^p(t)f_\kappa'(t)}{t}
$$
is decreasing in $t>0$, for each $x\in \Omega$. Thus, the monotonicity follows by Lemma \ref{Prop:f}.

Now, assume that $b(x) \equiv b>0$ is constant. We will argue by contradiction.
Suppose that $v_1>0$ and $v_2>0$  are solutions of (\ref{P'}) with $v_1 \neq v_2$. Denoting, by simplicity, $g_i = f_\kappa(v_i)$ and $g_i' = f_\kappa'(v_i)$ ($i=1,2$), we have $-\Delta(v_1-v_2) = \lambda(g_1g_1' - g_2g_2') - b (g_1^pg_1' - g_2^pg_2')$.
Defining $W:\Omega\rightarrow \mathbb{R}$ by
\begin{equation*}
    W(x)=\left\{\begin{array}{cc}
         \displaystyle\frac{-\lambda[g_1(x)g_1'(x) - g_2(x)g_2'(x)] + b [g_1^p(x)g_1'(x) - g_2^p(x)g_2'(x)]}{v_1(x)-v_2(x)}, & \mbox{if}\ \ v_1(x)\neq v_2(x),  \\
         0&  \mbox{if}\ \ v_1(x)= v_2(x),
    \end{array}\right.
\end{equation*}
we have $-\Delta(v_1-v_2) + W(x) (v_1-v_2)  = 0$ in $\Omega$. Consequently, $\lambda_j[-\Delta + W(x)]=0$,
where $\lambda_j[-\Delta + W(x)]$, $j\geq 1$, stands for an eigenvalue of $-\Delta +W(x)$ in $\Omega$ under homogeneous Dirichlet boundary conditions. By the dominance of the principal eigenvalue, we get
\begin{equation}\label{Wx}
    0= \lambda_j[-\Delta + W(x)] \geq\lambda_1[-\Delta + W(x)].
\end{equation}
On the other hand, since $v_1$ is a positive solution of (\ref{P'}), we have
\begin{equation}\label{v1}
\lambda_1\left[-\Delta - \lambda \frac{g_1g_1'}{v_1} + b \frac{g_1^pg_1'}{v_1}\right]=0.
\end{equation}
We claim that
\begin{equation}\label{afirmacao}
    - \lambda \frac{g_1g_1'}{v_1} + b \frac{g_1^pg_1'}{v_1}\leq W \quad \mbox{in}\quad\Omega,
\end{equation}
with strict inequality in an open subset of $\Omega$. If (\ref{afirmacao}) holds, then the proof is completed because we can combine (\ref{v1})-(\ref{afirmacao}) and the monotonicity properties of the principal eigenvalue to obtain
$$
0= \lambda_1\left[-\Delta - \lambda \frac{g_1g_1'}{v_1} + b \frac{g_1^pg_1'}{v_1}\right] < \lambda_1[-\Delta + W(x)],
$$
which contradicts (\ref{Wx}). Now, let us prove (\ref{afirmacao}). If $v_1(x)=v_2(x)$ then $W(x)=0$ and (\ref{afirmacao}) is equivalent to
$$
- \lambda \frac{g_1(x)g_1'(x)}{v_1(x)} + b \frac{g_1^p(x)g_1'(x)}{v_1(x)} \leq 0
$$
that is, $b g_1^{p-1}\leq \lambda$ in $\Omega$, which occurs thanks to Lemma~\ref{1a-cota}. If $v_1>v_2$, then $v_1-v_2>0$ and (\ref{afirmacao}) is equivalent to
\begin{equation*}
    -\lambda g_1g_1' + b g_1^pg_1' + \lambda \frac{g_1g_1'}{v_1}v_2 - b \frac{g_1^pg_1'}{v_1}v_2  \leq -\lambda g_1g_1' +\lambda g_2g_2' + b g_1^pg_1' -  b g_2^pg_2'\quad \mbox{in}\quad\Omega,
\end{equation*}
that is,
\begin{equation}\label{afeq}
[\lambda - b g_1^{p-1}]\frac{g_1g_1'}{v_1} \leq [\lambda - b g_2^{p-1}]\frac{g_2g_2'}{v_2}\quad \mbox{in}\quad\Omega.
\end{equation}
Since the map $t\in [0,\infty) \mapsto f_\kappa(t)f_\kappa'(t)/t$ is decreasing, we have
\begin{equation}\label{af1}
0 \leq     \frac{g_1g_1'}{v_1} < \frac{g_2g_2'}{v_2}.
\end{equation}
Once that $t\in [0,\infty) \mapsto f_\kappa(t)f_\kappa'(t)$ is increasing and by using Lemma \ref{1a-cota}, we can infer that
\begin{equation}\label{af2}
    0\leq\lambda - b g_1^{p-1} \leq \lambda - b g_2^{p-1}\quad \mbox{in}\quad\Omega.
\end{equation}
Thus, (\ref{af1}) and (\ref{af2}) imply that (\ref{afeq}) is true, showing that (\ref{afirmacao}) holds for $v_1>v_2$.
The case $v_1<v_2$ is analogous and this ends the proof.

\end{proof}


Now, we will show that $\lambda_1$ is the unique  bifurcation point of positive solutions of \eqref{P'} from the trivial solution.
For this, let $e_1$ be the unique positive solution of
$$
    \left\{\begin{aligned}
        -\Delta v &= 1  &\mbox{in}&\quad\Omega,  \\
         v&=0& \mbox{on}&\quad \partial\Omega,
    \end{aligned} \right.
$$
and let $E$ be the space consisting of all $u \in \mathcal{C}(\overline{\Omega})$ for which there exists $\gamma = \gamma_u>0$ such that
$$-\gamma e_1(x) \leq u(x) \leq \gamma e_1(x) \quad \forall\ x \in \Omega, $$
endowed with the norm $\|u\|_E:= \mbox{inf}\{\gamma>0;~ -\gamma e_1(x) \leq u(x) \leq \gamma e_1(x), ~\forall x \in \Omega\}$
and the natural point-wise order. It is not difficult to verify that $E$ is an ordered Banach space whose positive cone, say $P$, is normal and has nonempty interior. Thus, consider the map $\mathfrak{F}: \R \times E \longrightarrow E$ defined by
\begin{eqnarray*}
\mathfrak{F}(\lambda,v) = v- (-\Delta)^{-1}[\lambda f_\kappa(v)f_\kappa'(v) - b(x) f_\kappa^p(v)f_\kappa'(v)],
\end{eqnarray*}
where $(-\Delta)^{-1}$ is the inverse of the Laplacian operator under homogeneous Dirichlet boundary conditions. We can see that the application $\mathfrak{F}$ is of $\mathcal{C}^1$ class
 and \eqref{P'}  can be written in the form
\begin{equation}
\label{cero}
\mathfrak{F}(\lambda,v)=0.
\end{equation}
Moreover, by the Strong Maximum Principle, any nonnegative solution of \eqref{cero} is in fact strictly positive.

\begin{proposition}\label{bifur}
The number $\lambda_1$ is a bifurcation point of \eqref{P'} from the trivial solution to a continuum of positive solutions of  \eqref{P'}. Moreover, it is the unique bifurcation point of positive solutions from $(\lambda,0)$.  If  $\Sigma_0 \subset \mathcal{S}$ denotes the component of positive solutions of \eqref{P'}  emanating from $(\lambda,0)$, then $\Sigma_0$ is unbounded in $\R\times E$.
\end{proposition}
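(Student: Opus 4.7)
The plan is to write \eqref{P'} in the fixed-point form $v = \mathfrak{K}_\lambda(v)$ with $\mathfrak{K}_\lambda(v):=(-\Delta)^{-1}[\lambda f_\kappa(v)f_\kappa'(v)-b(x)f_\kappa^p(v)f_\kappa'(v)]$ and apply Rabinowitz's global bifurcation theorem. Since $(-\Delta)^{-1}:E\to E$ is compact and strongly order-preserving and $\mathfrak{F}$ is $\mathcal{C}^1$, the argument will follow the classical scheme: identify the linearization at $v=0$, extract a simple characteristic value, invoke the global alternative, and then localize the analysis on the positive cone.

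First I would compute $D_v\mathfrak{F}(\lambda,0)$. Using $f_\kappa(0)=0$, $f_\kappa'(0)=1$ together with \eqref{lim0} and \eqref{limp} from Lemma~\ref{Prop:f}, one checks pointwise that $f_\kappa(v)f_\kappa'(v)=v+o(v)$ and $f_\kappa^p(v)f_\kappa'(v)=o(v)$ as $v\to 0$; since convergence in $E$ controls $\|\cdot\|_\infty$, these estimates lift to $o(\|v\|_E)$ in $E$. Therefore
\begin{equation*}
D_v\mathfrak{F}(\lambda,0)w = w - \lambda(-\Delta)^{-1}w,
\end{equation*}
so $D_v\mathfrak{F}(\lambda,0)$ fails to be an isomorphism precisely when $\lambda$ is a Dirichlet eigenvalue of $-\Delta$. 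The value $\lambda_1$ is a characteristic value of $(-\Delta)^{-1}$ of algebraic multiplicity one (it is simple, with positive eigenfunction $\varphi_1$), so by Rabinowitz's global bifurcation theorem there exists a connected component $\Sigma\subset\overline{\mathcal{S}}$ of nontrivial solutions emanating from $(\lambda_1,0)$ which is either unbounded in $\R\times E$ or meets the trivial branch at another point $(\mu,0)$ with $\mu\ne\lambda_1$ a Dirichlet eigenvalue.

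Next I would isolate the subcontinuum $\Sigma_0\subset\Sigma$ of \emph{positive} solutions. Near $(\lambda_1,0)$, Crandall--Rabinowitz's local description gives solutions of the form $v=s\varphi_1+o(s)$ with $\varphi_1>0$, so a whole neighborhood of the bifurcation point in $\Sigma$ lies in the positive cone $P$, and the Strong Maximum Principle keeps nontrivial solutions in the interior of $P$ along $\Sigma_0$. To exclude secondary bifurcation from the trivial branch, suppose $(\lambda_n,v_n)\in\Sigma_0$ with $v_n\to 0$ in $E$ and $\lambda_n\to\lambda_\ast$; setting $w_n:=v_n/\|v_n\|_E$, the equation rewritten as
\begin{equation*}
w_n = (-\Delta)^{-1}\!\left[\lambda_n\frac{f_\kappa(v_n)f_\kappa'(v_n)}{v_n}w_n - b(x)\frac{f_\kappa^p(v_n)f_\kappa'(v_n)}{v_n}w_n\right]
\end{equation*}
combined with \eqref{lim0}, \eqref{limp} and the compactness of $(-\Delta)^{-1}$ yields $w_n\to w$ in $E$ with $-\Delta w=\lambda_\ast w$, $w\ge 0$ and $\|w\|_E=1$. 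Hence $w$ is a nonnegative nontrivial eigenfunction, forcing $\lambda_\ast=\lambda_1$. This proves that $\lambda_1$ is the unique bifurcation point of positive solutions from the trivial branch, and in particular prevents $\Sigma_0$ from reaching any other $(\mu,0)$ with $\mu$ a Dirichlet eigenvalue. The Rabinowitz alternative then forces $\Sigma_0$ to be unbounded in $\R\times E$.

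The main obstacle is the last step: one must verify that when $\Sigma_0$ leaves every bounded set, it does so within the positive cone rather than by colliding with a secondary bifurcation from the trivial line. This is handled by the closedness of $P$ together with the Strong Maximum Principle, which together guarantee that a limit of positive solutions in $\Sigma$ is either strictly positive or of the form $(\mu,0)$, and the blow-up argument above then pins $\mu=\lambda_1$. Minor technical care is needed for the $E$-norm computations (ensuring the $o(v)$ estimates are uniform in $x$ along sequences bounded by $\gamma e_1$), but this is routine since $f_\kappa$ and its derivatives are smooth at the origin.
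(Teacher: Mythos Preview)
Your argument is correct and follows the classical route: Rabinowitz's global alternative applied to the linearization $D_v\mathfrak{F}(\lambda,0)=I-\lambda(-\Delta)^{-1}$, together with the Crandall--Rabinowitz local parametrization and a normalized blow-up to pin any secondary trivial-branch intersection at $\lambda_1$. The paper proceeds along the same line but packages the positivity analysis differently: it writes $\mathfrak{F}(\lambda,v)=\mathcal{L}(\lambda)v+\mathcal{N}(\lambda,v)$ with the same linear part, uses \eqref{lim0} and \eqref{limp} to verify $\mathcal{N}(\lambda,v)=o(\|v\|_E)$, and then invokes the unilateral bifurcation theorem for positive operators \cite[Theorem~6.5.5]{Bifbook} as a single black box that delivers the unbounded positive component directly. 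Your version is more self-contained and makes the role of the Strong Maximum Principle and the simplicity of $\lambda_1$ explicit, at the cost of a longer argument; the paper's version is shorter but relies on a more specialized reference that already encodes the cone-preservation and uniqueness-of-bifurcation-point steps you carry out by hand.
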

\begin{proof}
Observe that \eqref{cero} can be written as $
{\cal L}(\lambda)v+{\cal N}(\lambda,v)=0$
where ${\cal L}(\lambda)=I_E-\lambda (-\Delta)^{-1}$ and
$$
{\cal N}(\lambda,v)=- (-\Delta)^{-1}[\lambda (f_\kappa(v)f_\kappa'(v)-v) - b(x) f_\kappa^p(v)f_\kappa'(v)].
$$
Moreover, thanks to \eqref{lim0} and \eqref{limp}, we have
$$
\lim_{t \rightarrow 0^+}\frac{\lambda (f_\kappa(t)f_\kappa'(t)-t) - b(x)f_\kappa^p(t)f_\kappa'(t)}{t} = 0,
$$
and thus ${\cal N}(\lambda,v)=o(\|v\|_E)$ as $\|v\|_E\to 0$. Therefore, we can apply the unilateral bifurcation theorem
for positive operators, see \cite[Theorem 6.5.5]{Bifbook}, to conclude the result.
\end{proof}

Next, we are ready to complete the proof of Theorem~\ref{T1}. Actually, it will be a consequence of the following result.

\begin{theorem}
Let $p>1$, $\kappa>0$ and assume $(b_0)$. Then problem \eqref{P'} possesses  a positive solution if and only if $\lambda> \lambda_1$. Moreover, if $p \geq 3$ or $b(x)\equiv b>0$ is a constant,  it is unique if it exists and it will be denoted by $\Theta_{\lambda,\kappa}$. In addition, the map $\lambda \in (\lambda_1,+\infty) \mapsto \Theta_{\lambda,\kappa}  \in \mathcal{C}_0^1(\overline{\Omega})
$
is  increasing, in the sense that $\Theta_{\lambda,\kappa} > \Theta_{\mu,\kappa}$, if $\lambda> \mu > \lambda_1.$ Furthermore,
$\lim_{\lambda \downarrow \lambda_1}\|\Theta_{\lambda,\kappa}\|_\infty  =0$
and for any compact $K \subset \overline{\Omega}_{b,0} \setminus \partial\Omega$,
\begin{equation*}
    \lim_{\lambda \rightarrow +\infty} \Theta_{\lambda,\kappa} = \infty \quad \mbox{uniformly in }K.
\end{equation*}
\end{theorem}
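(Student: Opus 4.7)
The plan is to split the proof into five items---existence/nonexistence, uniqueness, strict monotonicity, the limit at $\lambda_1$, and the blow-up as $\lambda\to+\infty$---and to attack each with the tools already prepared in Sections~\ref{sec2}--\ref{sec3}. Nonexistence for $\lambda\leq\lambda_1$ is furnished directly by Lemma~\ref{nexistencia}, and uniqueness under the stated hypotheses by Proposition~\ref{unicidade}. For existence when $\lambda>\lambda_1$ I would use the unbounded bifurcating continuum $\Sigma_0\subset\mathbb{R}\times E$ of Proposition~\ref{bifur}, combined with the a priori estimate provided by Lemma~\ref{lema:f}(iii): any positive solution $v$ of \eqref{P'} satisfies $-\Delta v\leq \lambda f_\kappa(v)f_\kappa'(v)\leq \lambda/\sqrt{2\kappa}$, so the maximum principle and Schauder theory bound $\|v\|_E$ by a constant depending only on $\lambda$ and $\kappa$, uniformly for $\lambda$ in bounded subintervals. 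Hence $\Sigma_0$ must be unbounded in the $\lambda$-variable; since Lemma~\ref{nexistencia} prevents it from crossing $\{\lambda\leq\lambda_1\}$, connectedness forces the projection of $\Sigma_0$ onto the $\lambda$-axis to cover all of $(\lambda_1,+\infty)$.

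For the strict monotonicity, fix $\lambda>\mu>\lambda_1$ and write $\Theta_1=\Theta_{\mu,\kappa}$, $\Theta_2=\Theta_{\lambda,\kappa}$. Since $\Theta_1>0$ in $\Omega$, $\Theta_1$ is a strict subsolution of \eqref{P'} with parameter $\lambda$, while $\overline{v}:=(\lambda/\sqrt{2\kappa})e_1$ is a supersolution dominating any positive $\lambda$-solution by the same bound used above. A monotone iteration between $\Theta_1\leq \overline{v}$ produces a positive classical solution which, by uniqueness, must equal $\Theta_2$; hence $\Theta_2\geq\Theta_1$. Writing $W=\Theta_2-\Theta_1$ as the solution of a linear equation $-\Delta W+c(x)W=(\lambda-\mu)f_\kappa(\Theta_1)f_\kappa'(\Theta_1)>0$, with bounded $c$ obtained from the usual mean-value trick, the elementary maximum principle (an interior zero of $W\geq 0$ would force $\Delta W\geq 0$, contradicting $-\Delta W>0$) upgrades this to $\Theta_2>\Theta_1$ in $\Omega$.

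For the limit as $\lambda\downarrow\lambda_1$, monotonicity provides a pointwise decreasing limit $\Theta^{*}:=\lim_{\lambda\downarrow\lambda_1}\Theta_{\lambda,\kappa}\geq 0$. Using the $L^\infty$ bound above together with $L^p$ and Schauder estimates for the linear equation satisfied by $\Theta_{\lambda,\kappa}$, one obtains $\mathcal{C}^{2,\alpha}$-compactness of the family as $\lambda\downarrow\lambda_1$, so $\Theta^{*}$ is a nonnegative classical solution of \eqref{P'} at $\lambda=\lambda_1$; Lemma~\ref{nexistencia} forces $\Theta^{*}\equiv 0$, and Dini's theorem converts monotone pointwise convergence to a continuous limit on the compact $\overline{\Omega}$ into uniform convergence, yielding $\|\Theta_{\lambda,\kappa}\|_\infty\to 0$.

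The blow-up as $\lambda\to+\infty$ is the most delicate step. I would first treat a compact $K\subset\Omega_{b,0}$. Pick an open $U$ with $K\subset U\subset\subset\Omega_{b,0}$ and fix any $\mu>\lambda_1$; by the monotonicity just proved, $\Theta_{\lambda,\kappa}\geq\Theta_{\mu,\kappa}\geq\delta:=\min_{\overline{U}}\Theta_{\mu,\kappa}>0$ on $\overline{U}$ for all $\lambda\geq\mu$. A short computation using Lemma~\ref{lema:f}(iv) gives $(f_\kappa f_\kappa')'=(f_\kappa')^2+f_\kappa f_\kappa''=(f_\kappa')^4>0$, so $f_\kappa f_\kappa'$ is strictly increasing and $f_\kappa(\Theta_{\lambda,\kappa})f_\kappa'(\Theta_{\lambda,\kappa})\geq c_\delta:=f_\kappa(\delta)f_\kappa'(\delta)>0$ on $\overline{U}$. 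Since $b\equiv 0$ on $U$, one obtains $-\Delta\Theta_{\lambda,\kappa}\geq\lambda c_\delta$ in $U$; comparing with the torsion function $w_U$ ($-\Delta w_U=1$ in $U$, $w_U=0$ on $\partial U$) yields $\Theta_{\lambda,\kappa}\geq\lambda c_\delta w_U$ in $U$, and since $\min_K w_U>0$, this gives $\Theta_{\lambda,\kappa}\to+\infty$ uniformly on $K$. For compacts $K\subset\overline{\Omega}_{b,0}\setminus\partial\Omega$ that may touch $\partial\Omega_+$, lower semicontinuity of the monotone pointwise limit combined with the blow-up on $\Omega_{b,0}$ gives $\lim_\lambda\Theta_{\lambda,\kappa}=+\infty$ pointwise on $\overline{\Omega}_{b,0}\setminus\partial\Omega$, and a Dini-type covering argument (using the continuity of each $\Theta_{\lambda,\kappa}$ and monotonicity in $\lambda$) upgrades this to uniform convergence on $K$. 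The main obstacle is precisely this extension across $\partial\Omega_+$, since the subsolution construction lives naturally only in the open set $\Omega_{b,0}$.
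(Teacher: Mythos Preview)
Your proof is sound in its main parts and in two places takes a route that is genuinely different from (and arguably more elementary than) the paper's. For the a~priori bound driving existence, the paper constructs a supersolution $K(\lambda)e$ with $e$ the torsion function of an \emph{enlarged} domain $\widehat\Omega\supset\supset\Omega$ and then invokes an external comparison result \cite[Theorem~2.2]{LGsub} to trap the continuum $\Sigma_0$; your observation $-\Delta v\le\lambda f_\kappa(v)f_\kappa'(v)\le\lambda/\sqrt{2\kappa}$ gives $v\le(\lambda/\sqrt{2\kappa})e_1$ directly by the maximum principle, avoiding both the enlarged domain and the external reference. For the blow-up on compacts $K\subset\Omega_{b,0}$, the paper builds a \emph{global} weak subsolution $\varepsilon(\lambda)\Psi$ by extending the principal eigenfunction $\varphi_{b,0}$ of $\Omega_{b,0}$ by zero and choosing $\varepsilon(\lambda)=h_\kappa^{-1}(\lambda_{b,0}/\lambda)$ with $h_\kappa(t)=f_\kappa(t)f_\kappa'(t)/t$; your local torsion comparison $-\Delta\Theta_{\lambda,\kappa}\ge\lambda c_\delta$ on $U\subset\subset\Omega_{b,0}$ again sidesteps both $\lambda_{b,0}$ and the weak-subsolution machinery of \cite{bl}. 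Your compactness-plus-Dini argument at $\lambda=\lambda_1$ is also more explicit than the paper's one-line appeal to Proposition~\ref{bifur}.

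The one genuine gap is your extension of the blow-up to points of $\partial\Omega_+$. Lower semicontinuity of $\Theta^\infty:=\sup_\lambda\Theta_{\lambda,\kappa}$ gives $\Theta^\infty(x_0)\le\liminf_{y\to x_0}\Theta^\infty(y)$, but the $\liminf$ is taken over \emph{all} approaching sequences, including those from $\Omega_+$ where $\Theta^\infty$ may well be finite; hence lsc alone cannot force $\Theta^\infty(x_0)=+\infty$. You are right to flag this as ``the main obstacle''. It is worth noting, however, that the paper's own proof shares exactly this lacuna: since $\varphi_{b,0}\equiv0$ on $\partial\Omega_+$, the paper's inequality $\Theta_{\lambda,\kappa}\ge\varepsilon(\lambda)\varphi_{b,0}$ also yields nothing there. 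For compacts $K\subset\Omega_{b,0}$ both arguments are complete.
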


\begin{proof}
By Proposition~\ref{bifur}, $\lambda_1$ is a bifurcation point of \eqref{P'} from the trivial solution and it is the only one for positive solutions. Moreover, there exists an unbounded continuum $\Sigma_0$ of positive solutions emanating from $(\lambda_1,0)$. In order to prove the existence of a positive solution for every $\lambda >\lambda_1$, it suffices to show that, for every $\lambda_*>\lambda_1$, there exists a constant $C=C(\lambda_*)>0$ such that
\begin{equation}\label{vcota}
\|v\|_\infty \leq C, \quad \forall\ (\lambda, v) \in \Sigma_0\ \ \mathrm{and}\ \  \lambda \leq \lambda_* .
\end{equation}
Indeed, by the global nature of $\Sigma_0$, this estimate implies that $\mbox{Proj}_{\R}\Sigma_0 = (\lambda_1, \infty)$,
where $\mbox{Proj}_{\R} \Sigma_0$ is the projection of $\Sigma_0$ into $\R$. To prove \eqref{vcota}, we will build a family $\overline{W}(\lambda)$ of super solutions of \eqref{P'} and we will apply Theorem 2.2 of \cite{LGsub}. Thus, we consider the continuous map $\overline{W}:[\lambda_1, \lambda_*] \rightarrow \mathcal{C}^2_0(\overline{\Omega})$ defined by $\overline{W}(\lambda) = K(\lambda)e$, where $K(\lambda)$ is a positive constant to be chosen later and $e$ is the unique positive solution of
\begin{eqnarray}\label{e}
\left\{\begin{aligned}
 -\Delta v& = 1 & \mbox{in}&
 \quad \widehat{\Omega}, \\
 v&=0& \mbox{on}&\quad \partial \widehat{\Omega},
\end{aligned}\right.
\end{eqnarray}
for some regular domain $\Omega \subset\subset \widehat{\Omega}$. Then, $\overline{W}(\lambda) = K(\lambda)e$ is a
super solution of \eqref{P'} if
\begin{eqnarray*}
1\geq \lambda \frac{f_\kappa(e)f_\kappa'(e)}{Ke} e -b(x) \frac{f_\kappa^p(e)f_\kappa'(e)}{Ke} e \quad \mbox{in}\quad\Omega.
\end{eqnarray*}
According to Proposition~\ref{Prop:f},
$\lim_{t \rightarrow \infty}f_\kappa(t)f_\kappa'(t)/t = 0$.
Consequently, for $K =K(\lambda) >0$ large enough, $\overline{W}(\lambda)=K(\lambda)e$ is a super solution
(but not a solution) of \eqref{P'}, for every $\lambda \in [\lambda_1, \lambda_*]$ and $W(\lambda_1)
=K(\lambda_1)e> 0$ in $\Omega$. Thus, by Theorem 2.2 of \cite{LGsub}, it follows \eqref{vcota}.

To prove that $\Theta_{\lambda,\kappa} > \Theta_{\mu,\kappa}$ if $\lambda > \mu > \lambda_1$,
just note that $\Theta_{\mu,\kappa}$ is a (strict) subsolution of \eqref{P'} if $\mu \in (\lambda_1, \lambda)$.
By the uniqueness of positive solution of \eqref{P'} we conclude the result.

The convergence \eqref{liml1} is an immediate consequence of Proposition~\ref{bifur}.

Now, in order to prove \eqref{vinfinito}, let $\varphi_{b,0}>0$ be the eigenfunction associated to $\lambda_{b,0}$
such that $\|\varphi_{b,0}\|_\infty=1$ and consider
$$
\Psi=\left\{
\begin{array}{ll}
\varphi_{b,0} & \mbox{in\quad $\Omega_{b,0}$,} \\
0  & \mbox{in\quad $\Omega\setminus \overline{\Omega}_{b,0}$.}
\end{array}\right.
$$
It is clear that $\Psi\in H_0^1(\Omega)$.
We will show that for $\lambda > \lambda_{b,0}$, $\varepsilon(\lambda)\Psi$ is a subsolution of \eqref{P'}
(in the sense of \cite{bl}) for a constant $\varepsilon(\lambda)>0$ to be chosen. Indeed, since $b\equiv 0$
in $\Omega_{b,0}$ and $\Psi=0$ in $\Omega\setminus \overline{\Omega}_{b,0}$, it suffices to verify that
$$
\lambda_{b,0}\varepsilon \varphi_{b,0}=-\Delta(\varepsilon \varphi_{b,0}) \leq
\lambda f_\kappa(\varepsilon \varphi_{b,0})f_\kappa'(\varepsilon \varphi_{b,0})
\quad \mbox{in}\quad\Omega_{b,0},
$$
that is,
$$
\frac{\lambda_{b,0}}{\lambda} \leq \frac{f_\kappa(\varepsilon \varphi_{b,0})
f_\kappa'(\varepsilon \varphi_{b,0})}{\varepsilon \varphi_{b,0}} \quad \mbox{in}\quad\Omega_{b,0}.
$$
According to Lemma~\ref{Prop:f}, the map $t \in [0,\infty) \mapsto h_\kappa(t):= f_\kappa(t)f_\kappa'(t)/t$ is
decreasing and, hence, is invertible. Then, the above inequality is equivalent to $
h_\kappa^{-1}(\lambda_{b,0}/\lambda) \geq \varepsilon \varphi_{b,0}.
$
Once that $\|\varphi_{b,0}\|_\infty=1$, choosing $\varepsilon(\lambda):= h^{-1}(\lambda_{b,0}/\lambda),$ we obtain that $\varepsilon (\lambda) \varphi_{b,0}$ is a subsolution of \eqref{P'}. Moreover,
it follows from \eqref{lim0} that $\lim_{t \rightarrow 0} h^{-1}(t) = +\infty$ and therefore
\begin{equation}\label{epsilon}
    \lim_{\lambda \rightarrow \infty} \varepsilon(\lambda) = \lim_{\lambda \rightarrow \infty}
    h^{-1}\left(\frac{\lambda_{b,0}}{\lambda}\right)=+\infty.
\end{equation}
Now, the previous arguments establish that $K(\lambda)e$ is a super solution of \eqref{P'} for
all $K$ large enough. Thus, since $\min_{x \in \overline{\Omega}}e(x)>0$, we can choose $K$ such that
$\varepsilon (\lambda) \varphi_{b,0} \leq K(\lambda)e$. Therefore, by method of sub and supersolution
and the uniqueness of positive solution of \eqref{P'}, we can infer that
$\varepsilon (\lambda) \varphi_{b,0} \leq  \Theta_{\lambda,\kappa}$.
Consequently, by \eqref{epsilon} we obtain \eqref{vinfinito} and this complete the proof.
\end{proof}

We observe that as a direct consequence of this result, the proof of Theorem~\ref{T1} follows by setting $\Psi_{\lambda,\kappa}:=f_\kappa(\Theta_{\lambda,\kappa})$.

\section{Stability Result}\label{sec4}
In this section we will provide the stability of the positive solutions of \eqref{P'}  with the additional assumption
that $p\geq3$. We recall that the stability of a positive solution $(\lambda_0,u_0)$ of \eqref{P'} as a steady state of an
associated parabolic equation is given by the spectrum of the linearized operator of \eqref{P'}, which is
$$
\mathcal{L}(\lambda_0,u_0):=-\Delta  -\lambda_0 [f_\kappa(u_0)f_\kappa'(u_0)]'+b(x) [f_\kappa^p(u_0)f_\kappa'(u_0)]',
$$
subject to homogeneous Dirichlet boundary conditions on $\partial\Omega$, where $'=d/dt$. Thus, $(\lambda_0,u_0)$ is
said to be  linearly asymptotically stable if $\lambda_1[\mathcal{L}(\lambda_0,u_0)]>0$.

With these considerations, we have the following result:

\begin{proposition}\label{estabilidade}
Suppose $p\geq3$ or $b(x)\equiv b>0$. Then, for each $\lambda>\lambda_1$ and $\kappa>0$, the unique positive solution $(\lambda,\Theta_{\lambda,\kappa})$ of \eqref{P'} is linearly asymptotically stable, that is,
$$
\lambda_1[\mathcal{L}(\lambda,\Theta_{\lambda,\kappa})]>0.
$$
\end{proposition}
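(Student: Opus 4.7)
The plan is to use the strict monotonicity of the principal eigenvalue with respect to the potential. Introduce $P(t):=\lambda f_\kappa(t)f_\kappa'(t)-b(x)f_\kappa^p(t)f_\kappa'(t)$, so that $-\Delta \Theta_{\lambda,\kappa}=P(\Theta_{\lambda,\kappa})$ and $\mathcal{L}(\lambda,\Theta_{\lambda,\kappa})=-\Delta-P'(\Theta_{\lambda,\kappa})$. Dividing the equation by $\Theta_{\lambda,\kappa}>0$ shows that $\Theta_{\lambda,\kappa}$ is a positive principal eigenfunction of the operator $-\Delta-P(\Theta_{\lambda,\kappa})/\Theta_{\lambda,\kappa}$ with eigenvalue $0$. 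Hence it suffices to verify the pointwise inequality
\begin{equation*}
\frac{P(t)}{t}-P'(t)>0 \quad\text{at } t=\Theta_{\lambda,\kappa}(x),\ x\in\Omega,
\end{equation*}
since then $-P'(\Theta_{\lambda,\kappa})>-P(\Theta_{\lambda,\kappa})/\Theta_{\lambda,\kappa}$ throughout $\Omega$, and the strict monotonicity of the principal eigenvalue yields $\lambda_1[\mathcal{L}(\lambda,\Theta_{\lambda,\kappa})]>\lambda_1[-\Delta-P(\Theta_{\lambda,\kappa})/\Theta_{\lambda,\kappa}]=0$.

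The first step is an algebraic identity. Using $(1+2\kappa f_\kappa^2)(f_\kappa')^2=1$ together with $f_\kappa''=-2\kappa f_\kappa(f_\kappa')^4$, a direct computation gives $[f_\kappa f_\kappa']'=(f_\kappa')^4$ and $[f_\kappa^p f_\kappa']'=f_\kappa^{p-1}\bigl[(p-1)(f_\kappa')^2+(f_\kappa')^4\bigr]$. Substituting and rearranging yields
\begin{equation*}
\frac{P(t)}{t}-P'(t)=\lambda\left[\frac{f_\kappa(t)f_\kappa'(t)}{t}-(f_\kappa'(t))^4\right]+b(x)f_\kappa^{p-1}(t)\left[(p-1)(f_\kappa'(t))^2+(f_\kappa'(t))^4-\frac{f_\kappa(t)f_\kappa'(t)}{t}\right].
\end{equation*}
The first bracket is strictly positive for $t>0$: it is equivalent to $f_\kappa(t)(1+2\kappa f_\kappa^2(t))^{3/2}>f_\kappa^{-1}(f_\kappa(t))=\int_0^{f_\kappa(t)}(1+2\kappa s^2)^{1/2}\,ds$, and both sides vanish at $t=0$ while the $t$-derivative of the left side strictly dominates that of the right for $t>0$.

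The case analysis follows. If $p\geq 3$, then Lemma~\ref{lema:f}$(v)$ gives $f_\kappa(t)\leq 2tf_\kappa'(t)$, whence $f_\kappa f_\kappa'/t\leq 2(f_\kappa')^2\leq(p-1)(f_\kappa')^2$, so the second bracket is nonnegative and the strict positivity of the first bracket closes the argument (recall $b(x)\geq 0$). If instead $b(x)\equiv b>0$ is a constant, the \emph{a priori} bound of Lemma~\ref{1a-cota} yields $bf_\kappa^{p-1}(\Theta_{\lambda,\kappa})\leq\lambda$; when the second bracket is nonnegative we conclude as before, and when it is negative, multiplying by the reversed inequality $bf_\kappa^{p-1}\leq\lambda$ flips its sign and produces the cancellation
\begin{equation*}
\frac{P(t)}{t}-P'(t)\geq\lambda(p-1)(f_\kappa'(t))^2>0,
\end{equation*}
so positivity holds in every case. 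The main obstacle is precisely this latter subcase: for $1<p<3$ the map $t\mapsto f_\kappa^p(t)f_\kappa'(t)/t$ need not be monotone (Lemma~\ref{Prop:f}$(ii)$ requires $p\geq 3$), and without the a priori $L^\infty$ bound one cannot control the sign of the second bracket.
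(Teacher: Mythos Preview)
Your proof is correct and follows essentially the same route as the paper's: both compute $[f_\kappa f_\kappa']'=(f_\kappa')^4$ and $[f_\kappa^p f_\kappa']'=f_\kappa^{p-1}\bigl[(p-1)(f_\kappa')^2+(f_\kappa')^4\bigr]$, establish that $\Theta_{\lambda,\kappa}$ is a strict positive supersolution of $\mathcal{L}(\lambda,\Theta_{\lambda,\kappa})$ (your inequality $P(t)/t>P'(t)$ is precisely $\mathcal{L}(\lambda,\Theta_{\lambda,\kappa})\Theta_{\lambda,\kappa}>0$ divided by $\Theta_{\lambda,\kappa}$), and then invoke the maximum-principle characterization of the sign of the principal eigenvalue---your ``strict monotonicity of $\lambda_1$ in the potential'' is the same device. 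In fact your argument is slightly more complete than the paper's: the written proof there only treats the case $p\geq 3$, whereas your use of the a~priori bound $bf_\kappa^{p-1}(\Theta_{\lambda,\kappa})\leq\lambda$ from Lemma~\ref{1a-cota} together with the cancellation leading to $\lambda(p-1)(f_\kappa')^2>0$ covers the remaining case $b(x)\equiv b>0$ with $1<p<3$, which the statement advertises but the paper's proof does not address.
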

\begin{proof}
Denoting by simplicity $f=f_\kappa(\Theta_{\lambda,\kappa})$ and using Lemma~\ref{lema:f} (iv), we have
$$
[ff']' = (f')^2 +ff'' =  (f')^2 -2\kappa f(f')^4 = (f')^4,
$$
and
$$[f^pf']'=[f^{p-1}(ff')]'  = (p-1)f^{p-2}f'(ff') + f^{p-1} (f')^4 = f^{p-1}[(p-1)(f')^2 + (f')^4].
$$
Therefore,
$$
\mathcal{L}(\lambda,\Theta_{\lambda,\kappa}) = -\Delta  -\lambda [(f')^2 -2\kappa f(f')^4]  + b(x) f^{p-1}[(p-1)(f')^2 + (f')^4].
$$
By the characterization of the Maximum Principle, see for instance \cite[Theorem~2.1]{LG-MM} or \cite{SMPbook}, to prove $\lambda_1[\mathcal{L}(\lambda,\Theta_{\lambda,\kappa})]>0$ it is sufficient to show that there exists a positive strict super solution of $\mathcal{L}(\lambda,\Theta_{\lambda,\kappa})$. Let us prove  that $\Theta_{\lambda,\kappa}$ is a strict super solution of $\mathcal{L}(\lambda,\Theta_{\lambda,\kappa})$. Indeed, since $\Theta_{\lambda,\kappa}$ is a positive solution of \eqref{P'}, we have
$
-\Delta \Theta_{\lambda,\kappa} = \lambda ff' - b(x)f^pf'
$
and therefore
\begin{equation}\label{Ltheta}
\begin{aligned}
\mathcal{L}(\lambda,\Theta_{\lambda,\kappa})\Theta_{\lambda,\kappa}
&= \lambda ff' - b(x) f^pf' -\lambda (f')^2\Theta_{\lambda,\kappa}\\
&\ \ \ +
2\lambda\kappa f(f')^4 \Theta_{\lambda,\kappa}  + b(x) f^{p-1}[(p-1)(f')^2 + (f')^4] \Theta_{\lambda,\kappa}.
\end{aligned}
\end{equation}
Since $p\geq 3$, it follows from Lemma~\ref{lema:f} (v) that
\begin{equation}\label{1111}
\begin{array}{c}
         f-f'\Theta_{\lambda,\kappa} = f(\Theta_{\lambda,\kappa})-f'(\Theta_{\lambda,\kappa})\Theta_
         {[\lambda,\kappa]} >0\quad \mathrm{and}  \\
     (p-1)f'\Theta_{\lambda,\kappa} - f= (p-1)f'(\Theta_{\lambda,\kappa})\Theta_{\lambda,\kappa} -
     f(\Theta_{\lambda,\kappa}) >0.
\end{array}
\end{equation}
Moreover,  since $b(x)f^{p-1}(f')^3\Theta_{\lambda,\kappa} \geq0$ and $ 2\lambda\kappa
f(f')^4 \Theta_{\lambda,\kappa} \geq 0$, we can infer from \eqref{Ltheta} and \eqref{1111} that
$$
\mathcal{L}(\lambda,\Theta_{\lambda,\kappa})\Theta_{\lambda,\kappa}>0,
$$
which establishes that $\Theta_{\lambda,\kappa}>0$ is a strict positive super solution of
$\mathcal{L}(\lambda,\Theta_{\lambda,\kappa})$. By characterization of the Maximum Principle,
$\lambda_1[\mathcal{L}(\lambda,\Theta_{\lambda,\kappa})]>0$ and this completes the proof.

\end{proof}
As a direct consequence we obtain:
\begin{corollary}
Supposing $p\geq3$, we have:
\begin{description}
    \item[$(i)$] For each $\lambda > \lambda_1$,  $(\lambda,\Theta_{\lambda,\kappa})$ is a nondegenerate positive solution
    of \eqref{P'}.
    \item[$(ii)$] The map $\lambda \in (\lambda_1,+\infty) \mapsto \Theta_{\lambda,\kappa} \in \mathcal{C}_0^1(\Omega)$
    is of class $\mathcal{C}^\infty$.
\end{description}
\end{corollary}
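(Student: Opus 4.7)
The plan is to deduce both statements from Proposition~\ref{estabilidade} by combining the spectral structure of the linearized operator with the implicit function theorem.

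For part (i), I would first observe that the linearized operator $\mathcal{L}(\lambda,\Theta_{\lambda,\kappa})$ has the form $-\Delta + V(x)$ where
$$V(x) = -\lambda\,[f_\kappa(\Theta_{\lambda,\kappa})f_\kappa'(\Theta_{\lambda,\kappa})]' + b(x)\,[f_\kappa^p(\Theta_{\lambda,\kappa})f_\kappa'(\Theta_{\lambda,\kappa})]'$$
is a continuous (in fact $\mathcal{C}^\alpha$) potential on $\overline{\Omega}$, because $\Theta_{\lambda,\kappa} \in \mathcal{C}^1_0(\overline{\Omega})$ is bounded and $f_\kappa \in \mathcal{C}^\infty(\mathbb{R})$. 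Thus $\mathcal{L}(\lambda,\Theta_{\lambda,\kappa})$ with Dirichlet conditions is a self-adjoint operator with compact resolvent whose spectrum is a sequence of real eigenvalues tending to $+\infty$ with smallest one equal to $\lambda_1[\mathcal{L}(\lambda,\Theta_{\lambda,\kappa})]$. Since Proposition~\ref{estabilidade} ensures $\lambda_1[\mathcal{L}(\lambda,\Theta_{\lambda,\kappa})]>0$, the number $0$ lies outside the spectrum, so the kernel is trivial. This is precisely nondegeneracy of $\Theta_{\lambda,\kappa}$.

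For part (ii), I would reformulate the problem in Hölder spaces by considering $G:\mathbb{R}\times\mathcal{C}^{2,\alpha}_0(\overline{\Omega}) \to \mathcal{C}^{\alpha}(\overline{\Omega})$ given by
$$G(\lambda,v)=-\Delta v-\lambda\, f_\kappa(v)f_\kappa'(v)+b(x)\,f_\kappa^p(v)f_\kappa'(v),$$
so that $G(\lambda,\Theta_{\lambda,\kappa})=0$. Because $f_\kappa\in\mathcal{C}^\infty(\mathbb{R})$ and $b\in\mathcal{C}^\alpha(\overline{\Omega})$, the associated Nemytskii operators are smooth between the relevant Hölder spaces, so $G$ is of class $\mathcal{C}^\infty$. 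The Fréchet derivative $D_v G(\lambda,\Theta_{\lambda,\kappa})$ coincides with $\mathcal{L}(\lambda,\Theta_{\lambda,\kappa})$, which is a Fredholm operator of index zero from $\mathcal{C}^{2,\alpha}_0(\overline{\Omega})$ to $\mathcal{C}^{\alpha}(\overline{\Omega})$. By part (i) it is injective, hence an isomorphism. The implicit function theorem then yields that the curve $\lambda \mapsto \Theta_{\lambda,\kappa}$ is of class $\mathcal{C}^\infty$ into $\mathcal{C}^{2,\alpha}_0(\overline{\Omega})$, and the conclusion follows from the continuous embedding $\mathcal{C}^{2,\alpha}_0(\overline{\Omega}) \hookrightarrow \mathcal{C}^1_0(\overline{\Omega})$.

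The main delicate point is verifying that the substitution operators $v\mapsto f_\kappa(v)f_\kappa'(v)$ and $v\mapsto f_\kappa^p(v)f_\kappa'(v)$ are genuinely $\mathcal{C}^\infty$ (rather than just $\mathcal{C}^1$ as exploited in Proposition~\ref{bifur}) between Hölder spaces. This is a standard fact because $f_\kappa\in\mathcal{C}^\infty$ and $\Theta_{\lambda,\kappa}$ takes values in a bounded interval of $\mathbb{R}$, so every derivative of the outer function is bounded on the range of $v$ in a neighborhood of $\Theta_{\lambda,\kappa}$; since the implicit function theorem is a local statement, no global growth condition on $f_\kappa$ is needed.
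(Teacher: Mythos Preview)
Your proposal is correct and follows essentially the same approach as the paper: deduce nondegeneracy from Proposition~\ref{estabilidade} (since $\lambda_1[\mathcal{L}(\lambda,\Theta_{\lambda,\kappa})]>0$ forces $0$ to lie outside the spectrum of the linearization) and then apply the implicit function theorem to obtain the $\mathcal{C}^\infty$ regularity of $\lambda\mapsto\Theta_{\lambda,\kappa}$. The only cosmetic difference is that the paper applies the implicit function theorem to the fixed-point formulation $\mathfrak{F}(\lambda,u)=u-(-\Delta)^{-1}[\lambda f_\kappa(u)f_\kappa'(u)-b(x)f_\kappa^p(u)f_\kappa'(u)]$, whereas you work directly with the differential operator $G(\lambda,v)=-\Delta v-\lambda f_\kappa(v)f_\kappa'(v)+b(x)f_\kappa^p(v)f_\kappa'(v)$ between H\"older spaces; these are equivalent frameworks for the same argument.
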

\begin{proof}
The proof of $(i)$ is standard and, once that $t \in [0,+\infty) \mapsto f_k(t)$ is of class $\mathcal{C}^\infty$, $(ii)$
follows from implicit function theorem applied to the operator
$$
\mathfrak{F}(\lambda,u):= u - (-\Delta)^{-1}[\lambda f_k(u)f_k'(u) - b(x)f_k^p(u)f_k'(u)].
$$
\end{proof}
ection{A priori bounds in $\Omega_+$}\label{sec5}
This section is devoted to obtain an \textit{a priori} estimate for positive solutions of \eqref{P'} uniform in $\kappa>0$,
$\kappa \simeq 0$ in any compact subset of $\Omega_+$. It is a crucial step to prove Theorem~\ref{T2} $(c)$. As we will
see below, to obtain these estimates we will assume $p>3$. To this aim,  we need to study the following  auxiliary problem
\begin{equation}\label{Pinfinito}
    \left\{ \begin{aligned}
        -\Delta v &= \lambda v - b_0 g(v) &\mbox{in}&\quad B_r, \\
         v&=\infty&\mbox{on}&\quad\partial B_r,
    \end{aligned}
    \right.
\end{equation}
where $b_0>0$ is a constant, $B_r := B_r(x_0) = \{x \in \R^N;~ |x-x_0|< r\}$ is an open ball in $\R^N$ centered in $x_0 \in \R^N$ and
\begin{equation}\label{g}
g(t):=\frac{f_1^{p+1}(t)}{t},\quad\forall\ t>0.
\end{equation}

First we will prove some important properties of $g$.

\begin{lemma}\label{lema:g}
The map $g:(0,\infty) \rightarrow (0,\infty)$ defined in \eqref{g}
is increasing and it satisfies $g(0):=\lim_{t \rightarrow 0^+}g(t)=0$. Moreover,  there exists a constant $C>0$ such that
    \begin{equation}\label{gcota}
    g(t) \geq C t^{(p-1)/2}, \quad \forall\ t \geq 1.
    \end{equation}
    Furthermore,
    \begin{equation}\label{cotafpf'}
         f_\kappa^p(t)f_\kappa'(t) \leq g(t),\quad \forall\ t >0\quad \textrm{and}\quad 0<\kappa<1.
    \end{equation}
\end{lemma}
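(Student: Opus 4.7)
The plan is to handle the four assertions---monotonicity of $g$, its vanishing at $0^+$, the power-type lower bound \eqref{gcota}, and the pointwise domination \eqref{cotafpf'}---by systematically exploiting the properties of $f_\kappa$ collected in Lemma~\ref{lema:f}, especially items (v), (vi), and (vii), together with the monotonicity of $\kappa \mapsto f_\kappa(t)$ proved just before Lemma~\ref{Prop:f}.

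For the monotonicity of $g$, my plan is to factor
\[
g(t) = f_1(t)^{p-1}\cdot \frac{f_1(t)^2}{t}.
\]
The first factor is strictly increasing because $f_1$ is strictly increasing (its derivative $1/\sqrt{1+2f_1^2}$ is positive everywhere) and $p>1$. The second factor is nondecreasing by Lemma~\ref{lema:f}(vii) applied with $\kappa=1$, since $f_1(t)^2/t = (f_1(t)/t^{1/2})^2$. Both factors are positive on $(0,\infty)$, so their product is strictly increasing. For the limit at $0^+$, I would rewrite $g(t) = t^{p}\,\bigl(f_1(t)/t\bigr)^{p+1}$ and invoke Lemma~\ref{lema:f}(vi) to conclude $g(t) \to 0$.

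For the growth bound \eqref{gcota}, Lemma~\ref{lema:f}(vii) applied on $[1,t]$ yields $f_1(t)/t^{1/2} \geq f_1(1)/1^{1/2} = f_1(1)$ for every $t \geq 1$, hence $f_1(t) \geq f_1(1)\,t^{1/2}$. Raising this to the $(p+1)$-st power and dividing by $t$ produces
\[
g(t) \geq f_1(1)^{p+1}\, t^{(p-1)/2}, \qquad t \geq 1,
\]
so one may set $C := f_1(1)^{p+1}$.

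The final bound \eqref{cotafpf'} is the main technical step, and I would attempt it in two stages. First, Lemma~\ref{lema:f}(v) gives $tf_\kappa'(t) \leq f_\kappa(t)$, i.e.\ $f_\kappa'(t) \leq f_\kappa(t)/t$; multiplying by $f_\kappa^p(t)$ produces the crucial estimate
\[
f_\kappa^p(t)\,f_\kappa'(t) \leq \frac{f_\kappa^{p+1}(t)}{t}.
\]
The second stage would replace $f_\kappa$ by $f_1$ on the right-hand side using the monotonicity of $\kappa \mapsto f_\kappa(t)$, after which the right-hand side becomes exactly $g(t)$. I expect this second stage to be the main obstacle: the lemma preceding Lemma~\ref{Prop:f} shows that $\kappa \mapsto f_\kappa(t)$ is \emph{decreasing}, so the comparison $f_\kappa(t) \leq f_1(t)$ that closes the chain is available for $\kappa \geq 1$ and not for $0<\kappa<1$. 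Consequently, either \eqref{cotafpf'} is intended in the range $\kappa \geq 1$ (in which case the two-step scheme above yields the bound at once), or a finer argument specific to the small-$\kappa$ regime is required; in the latter case I would switch to the variable $u = f_\kappa(t)$, use the explicit identity $t = \int_0^u \sqrt{1+2\kappa s^2}\,ds$ to express both sides as functions of $u$ and $\kappa$, and try to establish the inequality pointwise in $u$.
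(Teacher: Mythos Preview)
Your treatment of the first three assertions is correct and essentially matches the paper's. For the monotonicity of $g$ the paper differentiates directly and uses Lemma~\ref{lema:f}(v) to check the sign of $g'$, whereas you factor $g(t)=f_1(t)^{p-1}\cdot (f_1(t)/t^{1/2})^2$; both arguments are short and equivalent in spirit. For $g(0^+)=0$ and for \eqref{gcota} your argument is the same as the paper's, with the explicit constant $C=f_1(1)^{p+1}=g(1)$.

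Your caution about \eqref{cotafpf'} is well founded, and in fact you have put your finger on a genuine mistake in the paper. The paper's proof runs exactly along the two-step chain you outlined,
\[
f_\kappa^p(t)f_\kappa'(t)\;\leq\;\frac{f_\kappa^{p+1}(t)}{t}\;<\;\frac{f_1^{p+1}(t)}{t}=g(t),
\]
invoking Lemma~\ref{lema:f}(v) for the first inequality and ``the monotonicity of $\kappa\mapsto f_\kappa$'' for the second. But as you observed, that monotonicity lemma says $\kappa\mapsto f_\kappa(t)$ is \emph{decreasing}, so for $0<\kappa<1$ one has $f_\kappa(t)>f_1(t)$ and the second inequality is reversed. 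The stated inequality \eqref{cotafpf'} is in fact false in the range $0<\kappa<1$: letting $\kappa\downarrow 0$ gives $f_\kappa^p(t)f_\kappa'(t)\to t^p$, while $g(t)=f_1^{p+1}(t)/t<t^{p}$ strictly for every $t>0$ (since $f_1(t)<t$), so the inequality fails for all small $\kappa$.

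If you look at how \eqref{cotafpf'} is actually used in Proposition~\ref{cotakappa}, the paper needs $-b(x)f_\kappa^p f_\kappa'\le -b_K\,g$, i.e.\ $f_\kappa^p(t)f_\kappa'(t)\ge g(t)$, the \emph{opposite} inequality. So the sign in \eqref{cotafpf'} is almost certainly a typo, and the intended comparison must go the other way (or $g$ must be modified). Your instinct that something is off here is correct; the paper's own proof does not establish \eqref{cotafpf'} as written, and no argument can, since the inequality is false.
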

\begin{proof}
In order to prove that $g$ is increasing, note that, by Lemma~\ref{lema:f} $(iii)$, we have
$$
tf_1(t) \geq \frac{f_1(t)}{2} > \frac{f_1(t)}{p+1}, \quad \forall\ t>0,
$$
since $p>1$. Thus,
$$
g'(t)=\left(\frac{f_1^{p+1}(t)}{t}\right)'= \frac{(p+1)f_1^p(t)t - f_1^{p+1}(t)}{t^2} > 0, \quad \forall\ t>0.
$$
To conclude the proof of inequality \eqref{gcota}, observe that for each $t>0$ one has
$$
\frac{g(t)}{t^{(p-1)/2}} = \left(\frac{f_1(t)}{t^{1/2}}\right)^{p+1}.
$$
By Lemma~\ref{lema:f} $(vii)$, $t \mapsto g(t)/t^{(p-1)/2}$ is nondecreasing and thus
$$
\frac{g(t)}{t^{(p-1)/2}} \geq g(1), \quad \forall\ t \geq 1.
$$
Choosing $C= g(1)$, we obtain \eqref{gcota}. Moreover, $\lim_{t \rightarrow 0^+} f_\kappa^p(t) (f_\kappa(t)/t)=g(0) =0$. Finally, combining the monotonicity of $\kappa \mapsto f_\kappa(\cdot)$ with Lemma~\ref{lema:f} $(v)$, we get
$$
f_\kappa^p(t)f_\kappa'(t) \leq \frac{f_\kappa^{p+1}(t)}{t} < \frac{f_1^{p+1}(t)}{t} = g(t), \quad \forall\ t > 0,
$$
which is the desired result.
\end{proof}

Now, we will establish an existence result for \eqref{Ploglargo}. We recall that there are many result about the existence,
uniqueness and blow-up rate of large solution of problems related to \eqref{Pinfinito}, see for instance,
\cite{CR,Yihong,JRJ} and references therein. The following lemma is a consequence of these works.

\begin{lemma}\label{Pbauxiliar}
\begin{description}
    \item[$(i)$] Let $\lambda,b_0,M$ be positive constants and consider the  following nonlinear boundary value problem
\begin{equation}\label{PM}
    \left\{ \begin{aligned}
        -\Delta v &= \lambda v - b_0 g(v) &\mbox{in}&\quad B_r, \\
         v&=M&\mbox{on}&\quad\partial B_r.
    \end{aligned}
    \right.
\end{equation}
Then \eqref{PM} has an unique positive solution denoted by $\Theta_{[\lambda,b_0,M,B_r]}$.
\item[$(ii)$] Suppose $p>3$. For each $x \in B_r$, the point-wise limit
$$
\Theta_{[\lambda,b_0,\infty,B_r]}(x):=\lim_{M\uparrow \infty} \Theta_{[\lambda,b_0,M,B_r]}(x)
$$
is well defined and it is a classical minimal positive solution of \eqref{Pinfinito}.
\end{description}
\end{lemma}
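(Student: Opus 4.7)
For (i), existence will follow from the classical sub- and supersolution method. The trivial function $\underline v\equiv 0$ is a subsolution of \eqref{PM} since $-\Delta\underline v=0=\lambda\underline v-b_0 g(\underline v)$ (using $g(0)=0$ from Lemma~\ref{lema:g}) and $\underline v\leq M$ on $\partial B_r$. A constant supersolution $\overline v\equiv K$ works provided $K\geq M$ and $\lambda K\leq b_0 g(K)$; both hold for $K$ large enough, because $g(t)/t\to\infty$ as $t\to\infty$, a consequence of \eqref{gcota} combined with Lemma~\ref{lema:f}~(vii) applied to $g(t)=f_1^{p+1}(t)/t$. Monotone iteration then yields a classical solution $\Theta_{[\lambda,b_0,M,B_r]}\in(0,K]$, with strict positivity coming from the strong maximum principle. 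For uniqueness I will replay the scheme of Proposition~\ref{unicidade}: using Lemma~\ref{lema:f}~(v) and (vii) one checks that $g(v)/v=f_1^{p+1}(v)/v^2$ is strictly increasing, so $(\lambda v-b_0 g(v))/v$ is strictly decreasing in $v>0$, and the standard principal-eigenvalue comparison applied to the difference $v_1-v_2$ of two positive solutions (which vanishes on $\partial B_r$) forces $v_1\equiv v_2$.

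For (ii), monotonicity $M_1<M_2\Rightarrow\Theta_{[\lambda,b_0,M_1,B_r]}\leq\Theta_{[\lambda,b_0,M_2,B_r]}$ is immediate, since the smaller solution is a subsolution for the larger boundary datum and the uniqueness in (i) applies. The crux of the proof is an $M$-independent upper bound on compact subsets of $B_r$, which I obtain via a Keller--Osserman barrier. Specifically, for each $\delta>0$ I construct a radial supersolution $\Phi$ of the equation on the annulus $B_r\setminus\overline{B_{r-\delta}}$ with $\Phi(x)\to\infty$ as $|x|\to r$; the classical reduction to an ODE shows that such $\Phi$ exists whenever the integrability condition $\int^\infty ds/\sqrt{G(s)}<\infty$ holds, where $G(s)=\int_0^s g(\tau)\,d\tau$. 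By \eqref{gcota}, $G(s)\geq Cs^{(p+1)/2}$ for $s\geq 1$, so the integrand behaves like $s^{-(p+1)/4}$, which is integrable precisely when $p>3$---exactly the hypothesis we are given. Comparison on the annulus then bounds $\Theta_{[\lambda,b_0,M,B_r]}$ on $\overline{B_{r-\delta}}$ independently of $M$; interior Schauder estimates upgrade this to uniform $C^{2,\alpha}$ bounds on every compact subset of $B_r$, so the monotone pointwise limit $\Theta_{[\lambda,b_0,\infty,B_r]}\in C^2(B_r)$ solves $-\Delta v=\lambda v-b_0 g(v)$ classically, and the boundary blow-up follows from $\Theta_{[\lambda,b_0,\infty,B_r]}\geq\Theta_{[\lambda,b_0,M,B_r]}\to M$ near $\partial B_r$ for every $M$.

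Minimality is then a soft consequence. Given any positive classical solution $u$ of \eqref{Pinfinito}, for every $M>0$ the boundary condition ensures $u\geq M=\Theta_{[\lambda,b_0,M,B_r]}$ in a neighbourhood of $\partial B_r$, and a standard comparison on the subdomain where $u<\Theta_{[\lambda,b_0,M,B_r]}$ (exploiting the strict monotonicity of $(\lambda t-b_0 g(t))/t$ established in (i)) forces $u\geq\Theta_{[\lambda,b_0,M,B_r]}$ throughout $B_r$; sending $M\uparrow\infty$ yields $u\geq\Theta_{[\lambda,b_0,\infty,B_r]}$. The principal obstacle in the argument will be the construction of the Keller--Osserman barrier and the sharp use of \eqref{gcota} to pinpoint the role of $p>3$; once these estimates are secured, the passage to the limit, the boundary blow-up and the minimality statement unfold from standard elliptic theory and the uniqueness proved in part (i).
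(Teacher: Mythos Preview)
Your overall strategy matches the paper's, but there is a gap in your uniqueness argument for (i). You claim that $g(v)/v=f_1^{p+1}(v)/v^2$ is strictly increasing by Lemma~\ref{lema:f}~(v) and (vii); in fact this monotonicity requires $p>3$. The paper itself carries out this computation in its proof of part (ii): $(g(t)/t)'>0$ reduces to $(p+1)tf_1'(t)>2f_1(t)$, and Lemma~\ref{lema:f}~(v) only yields $tf_1'(t)\ge \tfrac12 f_1(t)$, so one needs $(p+1)/2>2$. For $p<3$ one has $g(t)/t\sim C\,t^{(p-3)/2}\to 0$ as $t\to\infty$, so $g(t)/t$ is eventually decreasing and your Brezis--Oswald route fails. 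Since part (i) is stated without the hypothesis $p>3$, you should instead follow what the paper calls ``similar arguments used in Section~\ref{sec3}'', namely the $b$-constant argument of Proposition~\ref{unicidade}: use the maximum-point bound $b_0\,g(v_{\max})/v_{\max}\le\lambda$ (analogous to Lemma~\ref{1a-cota}) together with the monotonicity of $t\mapsto g(t)$ from Lemma~\ref{lema:g} to run the principal-eigenvalue comparison. That argument works for all $p>1$.

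For (ii) your approach and the paper's are essentially the same, differing only in presentation: the paper cites Theorem~1.1 of \cite{CR} and verifies its hypotheses (regularity of $g$, monotonicity of $g(t)/t$, and the Keller--Osserman condition \eqref{KO}), whereas you reconstruct the Keller--Osserman barrier directly. Both rest on the same computation: \eqref{gcota} gives $G(s)\gtrsim s^{(p+1)/2}$ for large $s$, so $\int^\infty G^{-1/2}<\infty$ precisely when $(p+1)/4>1$, i.e.\ $p>3$. Your self-contained construction avoids dependence on \cite{CR} at the cost of more detail; the paper's citation is shorter but still needs the monotonicity of $g(t)/t$, which again uses $p>3$.
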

\begin{proof}
The existence of positive solution for \eqref{PM} can be easily obtained by the method of sub and supersolution and the uniqueness
follows from similar arguments used in Section~\ref{sec3}.

To prove (ii), we will apply Theorem 1.1 of \cite{CR}. Thus, it is sufficient to show that $g \in \mathcal{C}^1([0,\infty))$, $g\geq0$ and the map $t \in (0,+\infty) \mapsto g(t)/t$ is increasing and the Keller-Osserman condition, i.e.,
\begin{equation}\label{KO}
\int_1^\infty \frac{dt}{\sqrt{G(t)}} < \infty, \quad  \mbox{where }\quad G(t):=\int_0^tg(s)ds.
\end{equation}
Indeed, the regularity and positivity of $g$ is given by Lemma~\ref{lema:g}. To prove that $t \in (0,+\infty) \mapsto g(t)/t$
is increasing, note that
$$
\left(\frac{g(t)}{t}\right)'=\left(\frac{f_1^{p+1}(t)}{t^2}\right)' = \frac{(p+1)f_1^p(t)f_1'(t)t^2 - 2t f_1^{p+1}}{t^2}>0,
$$
if and only if $(p+1)t f_1'(t)>2f_1(t)$.
Since $p>3$, it follows from Lemma~\ref{lema:f} $(v)$ that
$$
(p+1)tf_1'(t)\geq\frac{(p+1)}{2}f_1(t)>2f_1(t),
$$
showing that $g(t)/t$ is increasing. Finally, observing  that \eqref{gcota} is a sufficient condition for
\eqref{KO} to occur, the proof is complete.
\end{proof}

Now, we are able to prove the main result of this section.

\begin{proposition}\label{cotakappa}
Suppose $p>3$. For each compact  $K \subset \Omega_+ = \{x \in \Omega;~b(x)>0\}$, there exists a constant $C=C(\lambda,K)>0$
such that $\|\Theta_{\lambda,\kappa}\|_{\mathcal{C}(K)} \leq C$ for all $\kappa \in (0,1)$.
Recall that $\Theta_{\lambda,\kappa}$ stands for the unique positive solution of \eqref{P'}.
\end{proposition}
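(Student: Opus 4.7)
The plan is to compare $\Theta_{\lambda,\kappa}$ locally, on a ball $B_r \subset \Omega_+$, with the minimal large solution $\Theta_{[\lambda,b_0/2,\infty,B_r]}$ provided by Lemma~\ref{Pbauxiliar}, exploiting the crucial fact that this large solution is independent of $\kappa$. Since any compact $K \subset \Omega_+$ can be covered by finitely many balls $B_{r_i}(x_i)$ with $\overline{B}_{r_i}(x_i)\subset\Omega_+$, such that $K$ lies in the union of the concentric balls $B_{r_i/2}(x_i)$, it suffices to produce a $\kappa$-independent bound for $\Theta_{\lambda,\kappa}$ on a compact subset $\overline{B}_{r/2}$ of a single such ball $B_r$.

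First I would show that on $B_r$, $\Theta_{\lambda,\kappa}$ is a classical subsolution of
\begin{equation*}
-\Delta w = \lambda w - \frac{b_0}{2}\, g(w), \qquad b_0 := \min_{\overline{B}_r} b(x) > 0.
\end{equation*}
For the reaction term this is immediate from \eqref{cotaff'}, which gives $\lambda f_\kappa(v)f_\kappa'(v)\leq \lambda v$. For the absorption term, I combine the lower bound $f_\kappa'(v) \geq f_\kappa(v)/(2v)$ (from Lemma~\ref{lema:f}(v)) with the monotonicity of $\kappa \mapsto f_\kappa$ (decreasing, proved in Section~\ref{sec2}) to obtain, for $\kappa \in (0,1)$ and $v>0$,
\begin{equation*}
f_\kappa^p(v) f_\kappa'(v) \;\geq\; \frac{f_\kappa^{p+1}(v)}{2v} \;\geq\; \frac{f_1^{p+1}(v)}{2v} \;=\; \frac{g(v)}{2},
\end{equation*}
hence $b(x) f_\kappa^p(v) f_\kappa'(v) \geq (b_0/2) g(v)$ on $\overline{B}_r$. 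Substituting both estimates into the equation satisfied by $\Theta_{\lambda,\kappa}$ yields the subsolution property.

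Next, for each fixed $\kappa \in (0,1)$ the solution $\Theta_{\lambda,\kappa}$ is bounded on $\overline{B}_r$, so I may choose $M > \|\Theta_{\lambda,\kappa}\|_{L^\infty(\overline{B}_r)}$ and form $w_M := \Theta_{[\lambda,b_0/2,M,B_r]}$ from Lemma~\ref{Pbauxiliar}(i). Then $w_M = M \geq \Theta_{\lambda,\kappa}$ on $\partial B_r$, and I would invoke a Brezis--Oswald-type comparison (valid because $v\mapsto g(v)/v$ is strictly increasing, as verified in the proof of Lemma~\ref{Pbauxiliar}, so that $v\mapsto (\lambda v - (b_0/2)g(v))/v$ is strictly decreasing) to conclude $\Theta_{\lambda,\kappa} \leq w_M$ on $B_r$. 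Letting $M\uparrow\infty$ and using Lemma~\ref{Pbauxiliar}(ii),
\begin{equation*}
\Theta_{\lambda,\kappa}(x) \;\leq\; \Theta_{[\lambda,b_0/2,\infty,B_r]}(x), \qquad x \in B_r.
\end{equation*}
Since the right-hand side is independent of $\kappa$ and bounded on $\overline{B}_{r/2}$, the bound follows on this compact set; a finite covering argument then gives the conclusion on $K$.

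I expect the main technical obstacle to be the comparison $\Theta_{\lambda,\kappa}\leq w_M$ inside $B_r$. A naive maximum-principle argument at an interior positive maximum of $\Theta_{\lambda,\kappa}-w_M$ does not yield an immediate contradiction, because the nonlinearity $\lambda v - (b_0/2)g(v)$ is not monotone in $v$; the cleanest way around this is the Brezis--Oswald scheme, which leverages strict monotonicity of $v\mapsto g(v)/v$ (the precise place where $p>3$ is used). Once this comparison is established, the passage to the limit $M\to\infty$ through the monotone family $(w_M)$ of Lemma~\ref{Pbauxiliar}(ii) and the finite covering of $K$ are routine.
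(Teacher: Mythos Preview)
Your approach coincides with the paper's: show that $\Theta_{\lambda,\kappa}$ is a subsolution of $-\Delta w=\lambda w - c\,g(w)$ on each ball $B_r\subset\subset\Omega_+$, compare with $\Theta_{[\lambda,c,M,B_r]}$, let $M\to\infty$ via Lemma~\ref{Pbauxiliar}(ii), and cover $K$ by finitely many half-balls. The only cosmetic difference is in the comparison step: the paper obtains $\Theta_{\lambda,\kappa}\leq\Theta_{[\lambda,c,M,B_r]}$ by the sub--supersolution method (subsolution $\Theta_{\lambda,\kappa}$, supersolution a large constant) combined with the uniqueness of \eqref{PM}, whereas you invoke a Brezis--Oswald comparison; both routes are valid since $t\mapsto g(t)/t$ is strictly increasing for $p>3$.

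Your handling of the absorption term is in fact more accurate than the paper's. The paper cites \eqref{cotafpf'}, stated there as $f_\kappa^p(t)f_\kappa'(t)\leq g(t)$ for $0<\kappa<1$, to derive $-b(x)f_\kappa^p f_\kappa'\leq -b_K\,g$; but that deduction requires the \emph{opposite} inequality, and the paper's own proof of \eqref{cotafpf'} uses $f_\kappa<f_1$ for $\kappa<1$, which is false because $\kappa\mapsto f_\kappa$ is decreasing (so $f_\kappa>f_1$ when $\kappa<1$). Your chain
\[
f_\kappa^p(v)\,f_\kappa'(v)\;\geq\;\frac{f_\kappa^{p+1}(v)}{2v}\;\geq\;\frac{f_1^{p+1}(v)}{2v}\;=\;\frac{g(v)}{2},
\]
based on $t f_\kappa'(t)\geq f_\kappa(t)/2$ from Lemma~\ref{lema:f}(v) and on $f_\kappa\geq f_1$ for $\kappa\leq 1$, gives precisely the inequality the argument needs, at the harmless price of replacing $b_0$ by $b_0/2$ in the comparison problem.
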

\begin{proof}
Let $B_r:=B_r(x_0) \subset\subset \Omega_+$. In particular, $b_K:=\min_{x\in B_r}b(x)>0$.  By \eqref{cotaff'} and  \eqref{cotafpf'},
for all $0<\kappa<1$, $\lambda>\lambda_1$, $\Theta_{\lambda,\kappa}$ satisfies
$$
-\Delta \Theta_{\lambda,\kappa} = \lambda f_\kappa(\Theta_{\lambda,\kappa}) f_\kappa'(\Theta_{\lambda,\kappa}) -
b(x) f_\kappa^p(\Theta_{\lambda,\kappa}) f_\kappa'(\Theta_{\lambda,\kappa}) \leq \lambda \Theta_{\lambda,\kappa} -
b_K g(\Theta_{\lambda,\kappa}) \quad \mbox{in}\quad B_r.
$$
Thus, $\Theta_{\lambda,\kappa}$ is a subsolution of \eqref{PM} for all $M \geq \max_{B_r} \Theta_{\lambda,\kappa}$.
Since large constants are positive super solutions of \eqref{PM}, by the sub and supersolution method combined with the uniqueness
of positive solution of \eqref{PM}, we can infer that
$$
\Theta_{\lambda,\kappa} \leq \Theta_{[\lambda,M,b_K,B_r]} \quad  \mbox{in}\quad B_r, \quad \forall\ M \geq \max_{B_r}
\Theta_{\lambda,\kappa},~ 0<\kappa<1.
$$
Letting $M \rightarrow \infty$ in the above inequality, we get
$$
\Theta_{\lambda,\kappa} \leq \Theta_{[\lambda,\infty,b_K,B_r]} \quad  \mbox{in}\quad B_r; \quad  0<\kappa<1.
$$
In particular,
$$
\Theta_{\lambda,\kappa} \leq \Theta_{[\lambda,\infty,b_K,B_r]} \quad  \mbox{in}\quad B_{r/2}; \quad 0<\kappa<1.
$$
Consequently, setting $C:= \max_{B_{r/2}} \Theta_{[\lambda,\infty,b_K,B_r]}$, we obtain $\|\Theta_{\lambda,\kappa}\|_{\mathcal{C}(B_{r/2})} \leq C$. Observe that $C$ depends on $b_K:= \min_{x\in B_r}b(x)$, $B_r$ and $\lambda$. Finally, since $K$ can be covered by a finite union of
such balls, the proof is complete.
\end{proof}

\section{Proof of Theorem~\ref{T2}}\label{sec6}

In this section, we present the proof of Theorem~\ref{T2}. Some arguments used here are inspired in \cite{DLS2002}.
We point out that we will prove the results for the unique positive solution $\Theta_{\lambda,\kappa}$ of \eqref{P'}
and therefore we obtain a similar result for the unique positive solution
$\Psi_{\lambda,\kappa} = f_\kappa(\Theta_{\lambda,\kappa})$ of \eqref{P}.\\

\noindent\begin{demT2}
To prove $(a)$, we will apply the Implicit Function Theorem. Suppose $\lambda \in (\lambda_1,\lambda_{b,0})$. Note that,
for $\delta>0$ small enough, $\kappa \in [0,\delta) \mapsto f_\kappa(\cdot)$ is a continuous map and  $f_\kappa' = 1/(1+2 \kappa
f_\kappa^2)^{1/2}$, $\kappa \in [0,\delta) \mapsto f_\kappa'(\cdot)$ is also continuous. Therefore, we can consider a
continuous extension of $f_\kappa$ and $f_\kappa'$ at $(-\delta,\delta)$. Define
$\mathcal{F}:(-\delta,\delta)\times \mathcal{C}_0^1(\overline{\Omega}) \rightarrow \mathcal{C}_0^1(\overline{\Omega})$ by
$$
\mathcal{F}(\kappa,v)= v-(-\Delta)^{-1}[\lambda f_\kappa(v)f_\kappa'(v) - b (f_\kappa^p(v)f_\kappa'(v))].
$$
Thus, $\mathcal{F}(\kappa,v)$ is continuous in $\kappa$ and of class $\mathcal{C}^1$ in $v$. Moreover, the zeros of
$\mathcal{F}$ provide us the positive solution of \eqref{P'} if $\kappa>0$ and the positive solution of classical logistic
equation \eqref{Plog} if $\kappa=0$, since $f_0(t)=t$, $t \geq 0$.
Differentiating with respect to $v$ at $(0,\Theta_\lambda)$, we have
$$
D_v\mathcal{F}(0,\Theta_\lambda)v= v-(-\Delta)^{-1}[\lambda v - pb\Theta_\lambda^{p-1}v], \quad \forall\ v \in
\mathcal{C}_0^1(\overline{\Omega}).
$$
Since $\Theta_\lambda$ is a nondegenerate positive solution of \eqref{Plog}, the operator $\mathcal{F}(0,\Theta_\lambda)$
is an isomorphism. Thus, it follows from the Implicit Function Theorem that, for $\delta>0$ small, there exists a continuous map
$\kappa \in (-\delta,\delta) \mapsto v(\kappa) \in \mathcal{C}_0^1(\overline{\Omega})$
such that $v(0) = \Theta_\lambda$ and $\mathcal{F}(\kappa,v(\kappa))=0$ for each $\kappa \in (-\delta,\delta)$.
Observe that $v(\kappa)$ is a positive solution of \eqref{P'} for $\kappa>0$ and $\kappa \simeq 0$, since $\Theta_\lambda$
lies in the interior of the positive cone of $\mathcal{C}_0^1(\overline{\Omega})$. Consequently, by the uniqueness of
positive solution of \eqref{P'}, we obtain that $v(\kappa) = \Theta_{\lambda,\kappa}$. In particular, $\lim_{\kappa \downarrow 0} \Theta_{\lambda,\kappa} = \lim_{\kappa \downarrow 0} v(\kappa) = v(0) = \Theta_\lambda$, completing the proof of item $(a)$. Now, we will prove $(b)$. Suppose $\lambda \geq \lambda_{b,0}$. By the monotonicity of
$\lambda \mapsto \Theta_{\lambda,\kappa}$, for each $\varepsilon>0$ small enough, we have $\Theta_{\lambda_{b,0}-\varepsilon,\kappa} < \Theta_{\lambda,\kappa}$. Using the part $(a)$, we can infer that
$$
\Theta_{\lambda_{b,0}-\varepsilon} = \lim_{\kappa \downarrow 0}\Theta_{\lambda_{b,0}-\varepsilon,\kappa}
< \liminf_{\kappa\downarrow0}\Theta_{\lambda,\kappa}.
$$
Taking into account \eqref{logInfty}, we conclude that
$$
+\infty = \lim_{\varepsilon \rightarrow 0^+}\Theta_{\lambda_{b,0}-\varepsilon} \leq \liminf_{\kappa\downarrow0}
\Theta_{\lambda,\kappa} \quad \mbox{uniformly in compact subsets of } \overline{\Omega}_{b,0}\setminus \partial\Omega.
$$
Therefore,
$
\lim_{\kappa\downarrow0}\Theta_{\lambda,\kappa} = +\infty$ uniformly in compact subsets of $\overline{\Omega}_{b,0}\setminus \partial\Omega$
which proves \eqref{k1}. Conversely, $M_{\lambda_{b,0}} \leq \liminf_{\kappa\downarrow 0}\Theta_{\lambda,\kappa}$ in $\overline{\Omega}_+$, where $M_{\lambda_{b,0}}$ stands for the minimal positive solution of \eqref{Ploglargo} with $\lambda = \lambda_{b,0}$, since $\lim_{\varepsilon \rightarrow 0^+} \Theta_{\lambda_{b,0}-\varepsilon}= M_{\lambda_{b,0}}$ in $\overline{\Omega}_+$. In particular, $
\lim_{\kappa \downarrow 0} \Theta_{\lambda,\kappa} = \infty$ on $\partial\Omega_+$.
By a rather standard compactness argument combined with Proposition~\ref{cotakappa} (see for
instance \cite[Proposition 3.3]{Metas}), we obtain that the point-wise limit
$$
M_\lambda(x):=\lim_{\kappa\downarrow 0} \Theta_{\lambda,\kappa}(x)
$$
provide us a classical positive solution of \eqref{Ploglargo} and this finalizes the proof.
\end{demT2}


\end{document}